\newcommand{\nats}{\mathbb{N}}
\newcommand{\ints}{\mathbb{Z}}
\newcommand{\rats}{\mathbb{Q}}
\newcommand{\re}{\mathbb{R}}
\newcommand{\cx}{\mathbb{C}}
\newcommand{\cB}{\mathcal{B}}
\newcommand{\bc}{\mathbf{c}}
\newcommand{\cC}{\mathcal{C}}
\newcommand{\be}{\mathbf{e}}
\newcommand{\FF}{\mathbb{F}}
\newcommand{\cG}{\mathcal{G}}
\newcommand{\cI}{\mathcal{I}}
\newcommand{\sI}{\mathsf{I}}
\newcommand{\sJ}{\mathsf{J}}
\newcommand{\cK}{\mathcal{K}}
\newcommand{\cL}{\mathcal{L}}
\newcommand{\cR}{\mathcal{R}}
\newcommand{\br}{\mathbf{r}}
\newcommand{\cS}{\mathcal{S}}
\newcommand{\cT}{\mathcal{T}}
\newcommand{\bw}{\mathbf{w}}
\newcommand{\bx}{\mathbf{x}}
\newcommand{\cZ}{\mathcal{Z}}
\newtheorem{thm}{Theorem}[section]
\newtheorem{lem}[thm]{Lemma}
\newtheorem{prop}[thm]{Proposition}
\newtheorem{cor}[thm]{Corollary}
\newtheorem{defn}[thm]{Definition}
\newtheorem{example}{Example}[section]
\newtheorem{remark}{Remark}[section]
\DeclareMathOperator\Rad{Rad}
\DeclareMathOperator\Jac{Jac}
\DeclareMathOperator\mult{mult}
\DeclareSymbolFont{bbold}{U}{bbold}{m}{n}
\DeclareSymbolFontAlphabet{\mathbbold}{bbold}
\newcommand{\bchi}{\boldsymbol{\chi}}
\title{A polynomial ideal associated to any $t$-$(v,k,\lambda)$ design\thanks{The second author's research is supported by an NSERC discovery grant.}}
\author[1]{William~J.~Martin}
\author[2]{Douglas~R.~Stinson}
\affil[1]{Department of Mathematical Sciences and Department of Computer Science,
Worcester Polytechnic Institute, 
100 Institute Road, 
Worcester, MA 01609,
USA}
\affil[2]{David R.\ Cheriton School of Computer Science, University of Waterloo,
Waterloo, Ontario, N2L 3G1, Canada}
\date{\today} 
\begin{document}
\maketitle

\begin{abstract}
We consider ordered pairs $(X,\cB)$ where $X$ is a finite set of size $v$ and $\cB$ is some collection 
of $k$-element subsets of $X$ such that every $t$-element subset of $X$ is contained in exactly $\lambda$
``blocks'' $B\in \cB$ for some fixed $\lambda$.  We represent each block $B$ by a zero-one vector $\bc_B$ of
length $v$ and explore the ideal $\cI(\cB)$ of polynomials in $v$ variables with complex coefficients which vanish on
the set $\{ \bc_B \mid B \in \cB\}$.  After setting up the basic theory, we investigate two parameters related to
this ideal: $\gamma_1(\cB)$ is the smallest degree of a non-trivial polynomial in the ideal $\cI(\cB)$ 
and $\gamma_2(\cB)$ is the smallest integer $s$ such that $\cI(\cB)$ is generated by a set of polynomials 
of degree at most $s$. We first prove the general bounds  $t/2 < \gamma_1(\cB) \le \gamma_2(\cB) \le k$. 
Examining important families of examples, we find that, for symmetric 2-designs and
Steiner systems, we have $\gamma_2(\cB) \le t$. But we expect $\gamma_2(\cB)$
to be closer to $k$ for less structured designs and we indicate this by constructing infinitely many 
triple systems satisfying $\gamma_2(\cB)=k$.
\end{abstract}

\section{Introduction}
\label{sec:intro}

Let $X$ be a finite set of size $v$ and consider a $k$-uniform hypergraph $(X,\cB)$ with vertex
set $X$ and block (hyperedge) set $\cB$.  We aim to study polynomial functions on $X$ which vanish on each element of $\cB$
so that $\cB$ may be viewed as the variety of some naturally defined ideal of polynomials in $v$ variables. In order 
to do so, we identify each block $B \in \cB$ with a 01-vector $\bc_B$, with entries indexed by the elements of 
$X$, whose $i^{\rm th}$ entry is equal to one if $i\in B$ and equal to zero otherwise. In other words, $\bc_B$ is the
$B^{\rm th}$ column of the point-block incidence matrix, $A$, of the hypergraph.  
In this paper we work
in characteristic zero and consider the polynomial ring $\cR = \cx[x_1,\ldots, x_v]$. The \emph{evaluation map}
$$ \varepsilon: \cR \rightarrow \cx^\cB $$
given by $ \varepsilon (f)\left(B \right) =  f( \bc_B) $ is a ring homomorphism and its kernel is the ideal of all
polynomials in $v$ variables which evaluate to zero on each block of the hypergraph. Denoting this kernel 
by  $\sI$, we see that $\sI$ is the ideal  of the finite variety $\{ \bc_B \mid B \in \cB \}$ and we write $\sI=\cI(\cB)$. 
Our goal in this paper is 
to explore connections between this ideal $\sI$ and the hypergraph $(X,\cB)$. Our primary question 
involves the identification of good generating sets $\cG$ for $\sI$ based on the combinatorial structure of 
the design $(X,\cB)$.
We are not concerned here with the actual size of the generating set; in fact, we prefer a set of polynomials
preserved by the automorphism group of the design. By ``good'' here, we mean principally that that polynomials in the
generating set are all of low degree. But we also seek polynomials that shed light on properties of 
the design.

\subsection{Background and related work}
\label{sec:background}

We are aware of no previous work on the ideals of designs, but algebraic geometry has a long history in the theory of error-correcting codes. We also have a theory of spherical codes and designs that involves the evaluation of multivariate polynomials at finite sets of points on the unit sphere. M\"{o}ller \cite{moller} constructs good quadrature 
rules for spherical integration by choosing zero sets of well-chosen families of polynomials.  Conder and 
Godsil \cite{condergodsil} studied the symmetric group as a polynomial space. The standard module of the 
Johnson association scheme $J(v,k)$ can be identified with polynomials of degree at most $k$ in $v$ variables 
in such a way that the polynomials of a given maximum degree $j$ correspond to a sum of eigenspaces 
$V_0+V_1+\cdots+V_j$.  This approach, which is implicit in \cite{del} is worked out in more detail in a later
paper of Delsarte \cite{del-hahn} (see also \cite{caldel}). These phenomena motivated Martin and Steele 
\cite{leech} to consider the ideal of polynomials that vanish on the shortest vectors of certain lattices.
In work in progress, Martin \cite{billideals} extends this to attach an ideal to any cometric association scheme by
viewing the columns of the $Q$-polynomial generator $E_1$ in the Bose-Mesner algebra as a finite variety. 
Several important examples are connected to combinatorial $t$-designs and -- when disentangled from the 
language of association schemes -- the problem of determining generating sets for the ideals of those 
association schemes reduces to the problem we address here. 

The ``polynomial method'' in combinatorics \cite{tao} is a powerful tool for deriving bounds on the size of 
combinatorial objects and for proving non-existence of extremal objects. In particular, a recent breakthrough by
Croot, Lev and Pach \cite{clp-annals} (see also \cite{ellenberg}) has stimulated interest in collections of multivariate
polynomials that vanish on some configuration in a finite vector space. Here, we work in characteristic zero and 
are interested in how the ideal we obtain is related to the structure of the design. 
By contrast, the authors of \cite{clp-annals,ellenberg} work over fields of positive characteristic. To see the 
connection, we note that, since our zero set lies in $\ints^v$, the polynomial generators $g\in\cG$ may always be chosen from  $\ints[\bx]$. So application of the reduction  $\ints \rightarrow \ints_p$ maps the ideal 
$\langle \cG\rangle \subseteq \ints[\bx]$  into the ideal of the same  finite variety considered modulo $p$.  
It will be an interesting follow-up task to determine when the image under  this map is the full ideal in positive characteristic.

\subsection{The trivial ideal}
\label{sec:trivial}

We pause to introduce our notation for the standard operations of elementary algebraic geometry. (See, e.g., 
\cite[Chapter~15]{df} for a basic introduction.) For a set $\cS \subseteq \cx^v$, we let $\cI(\cS)$ denote the 
ideal of all polynomials in $\cx[\bx]:=\cx[x_1,\ldots, x_v]$ that vanish at each point in $\cS$.
And if $\cG$ is any set of polynomials in $\cx[x_1,\ldots, x_v]$, we denote by $\cZ(\cG)$ the zero set of $\cG$, 
the collection of all points $\bc$ in $\cx^v$ which satisfy $f(\bc)=0$ for all $f \in \cG$. Note that, when $\cS$ is 
finite, we have $\cZ(\cI( \cS))= \cS$. In the opposite direction, for any ideal $\sJ$ of polynomials, the 
Nullstellensatz (see, e.g., \cite[p21]{fulton}, \cite[p179]{iva}) informs us that 
$\cI(\cZ(\sJ))=\Rad(\sJ)$, where $\Rad(\sJ)$ denotes the {\em radical} of ideal $\sJ$,  the
ideal of all polynomials $g$ such that $g^n\in \sJ$ for some positive integer $n$. A \emph{radical ideal}
is an ideal which is already closed under this process: $\Rad(\sJ) = \sJ$.

Our first example to consider is the \emph{complete uniform hypergraph} $(X,\cK^v_k)$.

\begin{lem}
\label{lem:idealofk-sets}
Let $X$ be a finite set of size $v\ge k$ and let $\cK^v_k = \binom{ X }{k}$ consist of all $k$-subsets of $X$.
Let 
\begin{equation}
\label{eqn:Gtrivial}
 \cG_0 = \{ x_1 + \cdots + x_v - k \} \cup \{ x_i^2-x_i \mid 1\le i\le v\}. 
\end{equation}
Then $\cI(\cK^v_k) = \langle \cG_0 \rangle$ and $\cZ( \langle \cG_0 \rangle) = \{ \bc_B \mid B \in \cK^v_k \}$. 
\end{lem}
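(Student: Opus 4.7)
First I would verify directly that $\cZ(\langle \cG_0 \rangle) = \{\bc_B : B \in \cK^v_k\}$: any common zero $\bc$ of $\cG_0$ has each $c_i \in \{0,1\}$ (from $x_i^2 - x_i$) and exactly $k$ of its coordinates equal to $1$ (from $x_1 + \cdots + x_v - k$), so $\bc$ is the incidence vector of a $k$-subset, and conversely every such vector annihilates all of $\cG_0$. Since $\langle \cG_0 \rangle \subseteq \cI(\cK^v_k)$ is immediate, the Nullstellensatz then yields
\[ \cI(\cK^v_k) \;=\; \cI(\cZ(\langle \cG_0 \rangle)) \;=\; \Rad(\langle \cG_0 \rangle), \]
so the substantive task is to show that $\langle \cG_0 \rangle$ is already a radical ideal.

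To do this I would factor the quotient through the intermediate ring $R = \cx[\bx]/\langle x_1^2 - x_1, \ldots, x_v^2 - x_v\rangle$. Using only the relations $x_i^2 = x_i$, a direct computation shows that the polynomials
\[ e_w \;=\; \prod_{i : w_i = 1} x_i \;\prod_{j : w_j = 0}(1 - x_j) \qquad (w \in \{0,1\}^v) \]
descend in $R$ to pairwise orthogonal idempotents summing to $1$; the orthogonality $e_w e_{w'} \equiv 0$ for $w \neq w'$ follows from $x_i(1-x_i) \equiv 0$ applied to any coordinate where $w$ and $w'$ disagree, and $\sum_w e_w = \prod_i (x_i + (1-x_i)) = 1$. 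This identifies $R$ with the product ring $\cx^{\{0,1\}^v}$, the $w$-coordinate being evaluation at $w$. Under this identification, the image of $x_1 + \cdots + x_v - k$ is the function $w \mapsto (w_1 + \cdots + w_v) - k$, whose zero set in $\{0,1\}^v$ is exactly the weight-$k$ vectors.

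In a finite product of fields every ideal consists of the functions vanishing on a prescribed subset of coordinates, so every such ideal is radical. Consequently
\[ \cx[\bx]/\langle \cG_0 \rangle \;\cong\; R / \langle x_1 + \cdots + x_v - k \rangle \;\cong\; \cx^{\binom{v}{k}} \]
is reduced, which is precisely to say that $\langle \cG_0 \rangle$ is radical in $\cx[\bx]$; combined with the Nullstellensatz step above this forces $\langle \cG_0 \rangle = \cI(\cK^v_k)$. A more elementary finish is also available: the same isomorphism gives $\dim_\cx \cx[\bx]/\langle \cG_0 \rangle = \binom{v}{k} = \dim_\cx \cx[\bx]/\cI(\cK^v_k)$, so the known inclusion of ideals must be an equality.

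The main obstacle is really the identification $R \cong \cx^{\{0,1\}^v}$ via the indicator idempotents $e_w$; once that is in place, everything else reduces to bookkeeping about a single principal ideal in a finite product of fields, and no machinery heavier than the Nullstellensatz is required.
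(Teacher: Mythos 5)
Your proof is correct but takes a genuinely different route from the paper's. The paper establishes radicality of $\langle \cG_0\rangle$ locally, by computing the Jacobian $\Jac(\cG_0,\bc_B)$ at each zero $\bc_B$ and observing that it has column rank $v$, so that each zero is a simple point; that Jacobian computation is then packaged as Proposition~\ref{prop:allradical} and reused throughout. You instead argue globally: modulo $\langle x_1^2-x_1,\ldots,x_v^2-x_v\rangle$ the quotient $R$ is a finite product of copies of $\cx$ via the orthogonal idempotents $e_w$, and in such a ring every ideal is radical, so the image of $x_1+\cdots+x_v-k$ cuts out exactly the weight-$k$ vertices of the cube and one finishes with either the Nullstellensatz or a dimension count. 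Both routes are valid. Yours is arguably cleaner for this lemma and, as a bonus, immediately re-proves Proposition~\ref{prop:allradical} (any ideal containing $\cT$ becomes an ideal of the product ring $R$, hence is radical) without any tangent-space computation, whereas the paper's Jacobian criterion is more geometric and localizes at each point. The one step you should spell out is why the orthogonal idempotents give an \emph{isomorphism} $R \cong \cx^{\{0,1\}^v}$ rather than merely a direct sum decomposition with $2^v$ nonzero summands: you need $\dim_\cx R \le 2^v$, which follows from the fact that the multilinear monomials span $R$ once $x_i^2$ is replaced by $x_i$. With that bound in place each summand $e_w R$ is one-dimensional and the argument is complete.
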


\begin{proof}
One easily checks that each $\bc_B$ is a common zero of the polynomials in $\cG_0$. Conversely, any point 
in $\cx^v$ which is a zero of each polynomial in $\cG_0$ must be a 01-vector with exactly $k$ ones. In order
to verify that $\cG_0$ generates the full ideal, we check that the Zariski tangent space at each point is full-dimensional.
Let $B \subset X$ be a $k$-set; evaluating the gradient $\nabla f$ of $f(\bx)= x_j^2 - x_j$ at $\bc_B$ we obtain 
$\pm \be_j$ where $\be_j$ is the standard basis vector with a one in position $j$ and all other entries zero.
So the Jacobian of the set  $\cG_0$
of $v+1$  polynomials in $v$ variables evaluated at $\bc_B$ takes the form
$$ \Jac(\cG_0, \bc_B) = \left[ \left. \frac{ \partial f_i }{ \partial x_h} \middle| \raisebox{-10pt}{${\bc_B}$} \right.  \right]_{h,i} 
= \left[ \begin{array}{rrrrr}
1 & \pm 1 & 0 & \cdots & 0 \\
1 &  0 & \pm 1 & \cdots & 0 \\
\vdots &   &   & \ddots &  \\
1 &  0 & 0 & \cdots & \pm 1 \end{array} \right]  $$
which clearly has column rank equal to $v$. This guarantees that each $\bc_B$ is a simple zero of 
$\langle \cG_0\rangle$ and so the ideal is indeed radical. It now follows from the Nullstellensatz that 
$$\cI(   \{ \bc_B \mid B \in \cK^v_k \} ) = \cI( \cZ( \langle \cG_0 \rangle) ) = \Rad(  \langle \cG_0 \rangle) =  \langle \cG_0 \rangle. $$
See Section \ref{sec:radical} for details of these last calculations.
\end{proof}

\section{Two parameters}
\label{sec:twopars}

In this paper, once we fix $v$ and $k$, every ideal will contain the ideal we have just considered. We call this 
the \emph{trivial ideal} and denote
\begin{equation}
\label{eqn:Trivial}
\cT = \left\langle  \cG_0 \right\rangle 
= \left\langle x_1 + \cdots + x_v - k , \ \ x_1^2 - x_1, \ldots, x_v^2- x_v \right\rangle . 
\end{equation}
For any $k$-uniform hypergraph $(X,\cB)$ on $v$ points, the ideal $\cI(\cB):=\cI( \{ \bc_B \mid B \in \cB\})$ 
will contain $\cT$ and  a polynomial  $f \in \cI(\cB)$ will be called \emph{non-trivial} if 
$f \not\in \cT$ and \emph{trivial} otherwise.

To each $C \subseteq \{1,2,\ldots,v\}$ we associate the monomial $x^C = \prod_{j\in C} x_j $ and note that,
for a block $B\in \cB$, the value of $x^C$ at point $\bc_B$ is one if $C \subseteq B$ and zero otherwise.
A $k$-uniform hypergraph $(X,\cB)$ is a $t$-$(v,k,\lambda)$ \emph{design} (or a block design of \emph{strength}
$t$) if, for every $t$-element subset $T\subseteq X$ of points, there are exactly $\lambda$ blocks $B\in \cB$ with
$T \subseteq B$ (so $\sum_{B\in \cB} f(\bc_B)=\lambda$ whenever $f(\bx)=x^T$ for some $t$-set $T\subseteq X$). Every $t$-$(v,k,\lambda)$ design is an $s$-$(v,k,\lambda_s)$
design for each $s\le t$ where $\lambda_s \binom{k-s}{t-s} = \lambda \binom{v-s}{t-s}$. 

The following characterization of $t$-designs is well-known (see, for example, Godsil \cite[Cor.~14.6.3]{godsil}).

\begin{lem}[Cf.~Delsarte {\cite[Theorem~7]{del-hahn}}]
\label{lem:characterizet-designs}
Let $X$ be a set of size $v$ and let $(X,\cB)$ be a $k$-uniform hypergraph defined on $X$ with corresponding 
vectors $\bc_B$ ($B\in \cB$) as defined above. Then $(X,\cB)$ is a $t$-design on $X$ if and only if the 
average over $\cB$ of any polynomial $f(\bx)$ in $v$ variables of total degree at most $t$ is equal  to
the average of $f(\bx)$ over the complete uniform hypergraph $\cK^v_k$ defined on $X$.
\end{lem}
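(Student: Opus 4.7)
The plan is to prove both directions by reducing to the monomial basis $\{x^C : C \subseteq \{1,\ldots,v\}\}$ of squarefree monomials. Since every $\bc_B$ is a $01$-vector, we have $x_i^2(\bc_B) = x_i(\bc_B)$, so on the finite set $\{\bc_B : B \in \cB\} \cup \{\bc_B : B \in \cK^v_k\}$ any polynomial of total degree $\le t$ evaluates the same as its multilinear reduction, which is a $\cx$-linear combination of monomials $x^C$ with $|C|\le t$. Consequently, the ``average property'' for all polynomials of degree $\le t$ is equivalent to the same property restricted to these squarefree monomials. Noting that $x^C(\bc_B) = 1$ if $C \subseteq B$ and $0$ otherwise, the average of $x^C$ over a family $\cF$ of $k$-sets is exactly the fraction of blocks of $\cF$ that contain $C$.

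For the forward direction, suppose $(X,\cB)$ is a $t$-$(v,k,\lambda)$ design. For each $s$-subset $C$ with $s \le t$, the derived-design identity $\lambda_s \binom{k-s}{t-s} = \lambda \binom{v-s}{t-s}$ together with $|\cB| = \lambda \binom{v}{t}/\binom{k}{t}$ gives, after routine simplification using $\binom{v}{t}\binom{t}{s}=\binom{v}{s}\binom{v-s}{t-s}$, the clean formula
\[
\frac{\lambda_s}{|\cB|} \;=\; \frac{\binom{k}{s}}{\binom{v}{s}}.
\]
The same quantity is the fraction of $k$-subsets of $X$ containing $C$, i.e.\ the average of $x^C$ over $\cK^v_k$. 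So the two averages agree on every squarefree monomial of degree $\le t$, and hence, by linearity and multilinear reduction, on every polynomial of degree $\le t$.

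For the converse, suppose averages agree on all polynomials of degree at most $t$. Specialize to $f(\bx) = x^T$ for a $t$-subset $T$. The average over $\cK^v_k$ equals $\binom{k}{t}/\binom{v}{t}$, a constant independent of $T$, while the average over $\cB$ equals $|\{B \in \cB : T \subseteq B\}|/|\cB|$. Thus $|\{B \in \cB : T \subseteq B\}|$ is the same constant $\lambda := |\cB|\binom{k}{t}/\binom{v}{t}$ for every $t$-subset $T$, which is exactly the defining property of a $t$-$(v,k,\lambda)$ design.

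No step looks genuinely hard; the only real content is the combinatorial identity $\lambda_s/|\cB|=\binom{k}{s}/\binom{v}{s}$, and the only small subtlety is remembering to invoke the multilinear reduction so that a general degree-$\le t$ polynomial truly is captured by the monomials $x^C$ with $|C|\le t$ on the set of $01$-incidence vectors.
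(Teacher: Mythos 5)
Your proof is correct and takes essentially the same route as the paper's: reduce to the squarefree monomials $x^C$ with $|C|\le t$, use the identity $\lambda_s/|\cB| = \binom{k}{s}/\binom{v}{s}$ (the paper writes the equivalent $\lambda_s/\lambda_0 = \binom{v-s}{k-s}/\binom{v}{k}$), and extend by linearity. You are a bit more careful than the paper in two minor respects---you explicitly invoke the multilinear reduction to justify that a general degree-$\le t$ polynomial is captured on $01$-vectors by squarefree monomials of degree $\le t$ (the paper asserts the linear-combination step without this caveat), and you spell out the converse direction, which the paper leaves implicit---but these are refinements of the same argument rather than a different one.
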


\begin{proof}
Let $C\subseteq X$ with $|C| = s \le t$. Exactly $\binom{v-s}{k-s}$ elements of $\cK^v_k$ contain $C$ so 
the average value of $f(\bx) = x^C$ over $\{ \bc_B \mid  B \in \cK^v_k \}$ is 
$$\binom{v-s}{k-s} \bigg/ \binom{v}{k} = \frac{ k(k-1) \cdots  (k-s+1) }{ v(v-1) \cdots (v-s+1) }  = \frac{\lambda_s}{\lambda_0}$$
which is exactly the average of $f(\bx)$ over the block set $\cB$. So the result holds for monomials. But every
polynomial in $v$ variables of total degree at most $t$ is a linear combination of such monomials, so the result 
holds for these as well by linearity.
\end{proof}

Given $(X,\cB)$, we seek combinatorially meaningful generating sets for $\cI(\cB)$.
Two polynomials $f(\bx)$ and $g(\bx)$ have the same value at every point $\bc_B$, $B \in \cB$ if and only
if their difference belongs to the ideal $\cI(\cB)$. For example, $f(\bx)=x_j^2$ and $g(\bx)=x_j$ take the same 
value on every 01-vector so $x_j^2-x_j$ belongs to the ideal of any design. 
We say $f(\bx)$ is a \emph{multilinear polynomial} in $v$ variables  if $f$ is linear in each $x_i$: i.e., each 
monomial with non-zero coefficient in $f$ is a product of distinct indeterminates. Modulo the trivial 
ideal $\cT$, each polynomial in $\cx[x_1,\ldots, x_v]$ is equivalent to some (not necessarily unique) 
multilinear polynomial  with zero constant term.  With a preference for polynomials
of smallest possible degree, we define two fundamental parameters.

\begin{defn}
\label{def:gamma12}
Let $(X,\cB)$ be a non-empty, non-complete $k$-uniform hypergraph on vertex set $X=\{1,\ldots,v\}$ with corresponding ring of 
polynomials $\cR = \cx[x_1,\ldots, x_v]$. Let $\cI(\cB)$ and $\cT$ be defined as above. Define
$$ \gamma_1(\cB) = \min \left\{ \deg f \mid f \in \cI(\cB), \ f \not\in \cT \right\} $$
and
$$ \gamma_2(\cB) = \min \left\{  \max \{ \deg f : f \in  \cG \} \mid  \cG \subseteq \cR, \ \langle \cG \rangle = 
\cI(\cB)  \right\} .$$
\end{defn}

So $\gamma_1(\cB)$ is the smallest possible degree of a non-trivial polynomial that vanishes on each block and 
$\gamma_2(\cB)$ is the smallest integer $s$ such that $\cI(\cB)$ admits a generating set all polynomials of which
have degree at most $s$.  Obviously, $ \gamma_1(\cB) \le \gamma_2(\cB)$; designs satisfying equality here are
particularly interesting.

\begin{thm}
\label{thm:tover2}
If $(X,\cB)$ is a $t$-design ($t\ge 2$) and $f \in \cI(\cB)$ is non-trivial, then $\deg f > t/2$. So, 
for any non-trivial $t$-design $(X,\cB)$, $\gamma_1(\cB) \ge  (t+1)/2$. 
\end{thm}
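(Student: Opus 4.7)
The plan is to suppose for contradiction that $f \in \cI(\cB)$ is non-trivial with $\deg f = d \le t/2$, and to derive that $f$ must actually lie in $\cT$.

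First I would reduce to a multilinear polynomial. Using the generators $x_i^2 - x_i \in \cT$, any monomial containing $x_i^{e}$ with $e \ge 2$ can be replaced by the same monomial with $x_i^e$ substituted by $x_i$, without raising the degree. Iterating, I obtain a multilinear polynomial $\tilde f$ with $\tilde f \equiv f \pmod{\cT}$ and $\deg \tilde f \le d$. Since $f$ is non-trivial, $\tilde f \notin \cT$ (in particular $\tilde f \ne 0$), and since $\cT \subseteq \cI(\cB)$, we still have $\tilde f \in \cI(\cB)$.

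Next I would produce a non-negative real-valued witness. Let $\overline{\tilde f}$ denote the polynomial obtained by conjugating the coefficients of $\tilde f$. For each $B\in\cB$, $\overline{\tilde f}(\bc_B)=\overline{\tilde f(\bc_B)}=0$, so $\overline{\tilde f}\in \cI(\cB)$ as well, and hence so does the product
\[ g(\bx) \;=\; \tilde f(\bx)\,\overline{\tilde f}(\bx). \]
This polynomial has $\deg g \le 2d \le t$, and for every $01$-vector $\bc$ we have $g(\bc) = |\tilde f(\bc)|^2 \ge 0$. Because $g$ vanishes on $\cB$, the average of $g$ over $\cB$ is zero. Now I would invoke Lemma~\ref{lem:characterizet-designs}: since $\deg g \le t$ and $(X,\cB)$ is a $t$-design, the average of $g$ over $\cB$ equals the average over the complete hypergraph $\cK^v_k$. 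Thus
\[ \sum_{B \in \cK^v_k} |\tilde f(\bc_B)|^2 \;=\; 0. \]
Each term is non-negative, so $\tilde f(\bc_B)=0$ for every $B\in\cK^v_k$, i.e., $\tilde f\in \cI(\cK^v_k)$. By Lemma~\ref{lem:idealofk-sets}, $\cI(\cK^v_k)=\cT$, contradicting $\tilde f\notin\cT$.

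This forces $\deg f > t/2$. Since $\gamma_1(\cB)$ is a positive integer, this immediately yields $\gamma_1(\cB) \ge \lceil (t+1)/2 \rceil \ge (t+1)/2$. The step I expect to require the most care is the complex-coefficient issue: one must pass to $\tilde f \, \overline{\tilde f}$ rather than $\tilde f^{\,2}$ in order to guarantee that the integrand in Lemma~\ref{lem:characterizet-designs} is non-negative, so that vanishing on average over $\cK^v_k$ implies pointwise vanishing. Everything else (the multilinear reduction, the degree doubling, the appeal to Lemmas~\ref{lem:idealofk-sets} and~\ref{lem:characterizet-designs}) is mechanical.
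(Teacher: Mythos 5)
Your proof is correct and follows essentially the same route as the paper: pass to a nonnegative witness of degree at most $t$ in $\cI(\cB)$, apply Lemma~\ref{lem:characterizet-designs} to force its average over $\cK^v_k$ to vanish, deduce pointwise vanishing on all $k$-sets, and invoke Lemma~\ref{lem:idealofk-sets} to conclude triviality. The only cosmetic differences are that the paper splits $F=f+ig$ into real and imaginary parts and squares each (invoking radicality of the trivial ideal to pass from $f^2$ to $f$), whereas you use $\tilde f\,\overline{\tilde f}$ directly and read off $\tilde f(\bc_B)=0$ from $|\tilde f(\bc_B)|^2=0$; your multilinear-reduction preamble is harmless but unnecessary.
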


\begin{proof}
Suppose $F \in \cI(\cB)$ has degree at most $t/2$. Write $F(\bx)=f(\bx)+ig(\bx)$ where $f,g\in \re[\bx]$ each have degree at most $t/2$. Since the entries of each $\bc_B$ are real, it's clear that $f,g\in \cI(\cB)$. 
Then $f^2 \in \cI(\cB)$ is a non-negative polynomial
of degree at most $t$. By Lemma \ref{lem:characterizet-designs}, its average over $\cB$ is zero hence its average over $\cK^v_k$ is also zero.
Since $f^2$ is everywhere non-negative, it must evaluate to zero on the incidence vector $\bc_B$ of
every $k$-set $B$. So it belongs to the ideal $\cI( \cK^v_k)$. Since this ideal is radical and contains 
$f^2$, it also contains $f$. By Lemma \ref{lem:idealofk-sets}, $f$ must be trivial. The same argument applies to $g$ and, hence, to $F$.
\end{proof}

\begin{remark}
The same sort of reasoning used in this proof shows that $\cI(\cB)$ admits a vector space basis, even a generating set, of polynomials with integer coefficients. Let $F$ be a polynomial which vanishes on $\cB$ and let $\{\zeta_1\ldots,\zeta_m\} \subseteq \cx$ be a basis for the subspace of $\cx$, as a vector space over $\rats$, that contains all the 
coefficients of $F$. Then there exist unique polynomials $F_1,\ldots,F_m$ in $\rats[\bx]$ with $F=\sum_h \zeta_h F_h$. Since  each $\bc_B$ is a vector with integer entries, the fact that $F$ evaluates to zero at $\bc_B$ implies
that each $F_h$ also vanishes at that point. So, scaling appropriately, we may assume each generator
belongs to $\ints[\bx]$.
\end{remark}

A standard result in the theory of designs (see, Cameron \cite{cameronnear} and Delsarte 
\cite[Theorem~5.21]{del}) is the fact that a $t$-design with $s$ distinct block intersection sizes satisfies $t\le 2s$.
We now show that Theorem \ref{thm:tover2} implies a stronger result, which we believe is new.

Let $C \subseteq X$ with characteristic vector $\bc$ and suppose $\{ |C \cap B| : B \in \cB \} = \{i_1,\ldots,i_s\}$. 
Then every $\bc_B$ for $B\in \cB$ is a zero of the degree $s$ \emph{zonal polynomial} 
$$ F(\bx) = ( \bc \cdot \bx - i_1 ) \cdots  ( \bc \cdot \bx - i_s ) . $$
Of course, if $|C|$ is sufficiently small, this polynomial belongs to the trivial ideal.

\begin{cor}
\label{cor:degree}
Let $(X,\cB)$ be a $t$-design and let $C_1,\ldots,C_\ell \subseteq X$ and $\{i_{1,1},\ldots, i_{1,s_1}, \ i_{2,1}, \ldots,$ $i_{\ell, s_\ell} \}$ be a multiset of integers such that, for every $B\in \cB$ there exist $1\le h\le \ell$ and $1\le j\le s_h$
with $|B\cap C_h| = i_{h,j}$. If there exists some $k$-set $S \not\in \cB$ with $|S \cap C_h| \not\in 
\{ i_{h,1},\ldots, i_{h,s_h} \}$ for all $h=1,\ldots,\ell$, then $\gamma_1(\cB) \le s_1+ \cdots  + s_\ell$ hence $s_1+ \cdots  + s_\ell > t/2$.
\end{cor}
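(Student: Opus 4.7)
The natural object to build is the product of zonal polynomials attached to the $C_h$'s. Specifically, letting $\bc_h$ denote the characteristic vector of $C_h$, I would set
$$ F(\bx) = \prod_{h=1}^{\ell} \prod_{j=1}^{s_h} \bigl( \bc_h \cdot \bx - i_{h,j} \bigr), $$
a polynomial of total degree $s_1 + \cdots + s_\ell$. The plan is to verify that $F$ is a non-trivial element of $\cI(\cB)$ and then invoke Theorem \ref{thm:tover2}.

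The first step, showing $F \in \cI(\cB)$, is immediate from the hypothesis: given a block $B \in \cB$, by assumption there is a pair $(h,j)$ with $|B\cap C_h| = i_{h,j}$, so $\bc_h \cdot \bc_B - i_{h,j} = 0$ and the corresponding factor of $F(\bc_B)$ vanishes. The slightly more delicate step is checking that $F$ is non-trivial. Because $\cT = \cI(\cK^v_k)$ by Lemma \ref{lem:idealofk-sets}, any element of $\cT$ must vanish on the incidence vector of every $k$-subset of $X$. So it suffices to exhibit one $k$-set on which $F$ does not vanish, and that is exactly what the $k$-set $S \notin \cB$ from the hypothesis provides: for each $h$ and $j$, $|S \cap C_h| \ne i_{h,j}$, so every factor of $F(\bc_S)$ is a nonzero integer and the product is nonzero. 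Hence $F \notin \cT$.

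Having produced a non-trivial element of $\cI(\cB)$ of degree $s_1 + \cdots + s_\ell$, we conclude $\gamma_1(\cB) \le s_1 + \cdots + s_\ell$. Theorem \ref{thm:tover2} then gives $s_1 + \cdots + s_\ell \ge \gamma_1(\cB) > t/2$, which is the stated inequality. There is no real obstacle here: the argument is essentially an inclusion-exclusion on which linear forms $\bc_h \cdot \bx - i_{h,j}$ must vanish at each block, and the only thing to check carefully is that the hypothesized ``exceptional'' $k$-set $S$ really does certify non-triviality — which it does precisely because Lemma \ref{lem:idealofk-sets} identifies $\cT$ with the ideal of the complete $k$-uniform hypergraph.
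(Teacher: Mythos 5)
Your proof is correct and is essentially identical to the paper's: both construct the same product of zonal polynomials $F(\bx) = \prod_{h,j}(\bchi_{C_h}\cdot\bx - i_{h,j})$, check that it vanishes on every block, use the hypothesized $k$-set $S$ to certify $F \notin \cT$ (via $\cT = \cI(\cK^v_k)$), and then apply Theorem~\ref{thm:tover2}. No meaningful difference.
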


\begin{proof}  For $Y \subseteq X$, define 01-vector $\bchi_{Y}$ by $(\bchi_Y)_a = 1$ if $a\in Y$ and 
$(\bchi_S)_a = 0$ otherwise. E.g., $\bchi_B = \bc_B$ when $B$ is a block.
Consider the product of $\ell$ zonal polynomials
$$ F(\bx) = \prod_{h=1}^\ell \prod_{j=1}^{s_h} \left( \bchi_{C_h} \cdot \bx  - i_{h,j} \right).$$
By hypothesis, $F(\bc_B)=0$ for every $B\in \cB$.  Since $F(\bchi_S)\neq 0$, $F$ is non-trivial. 
So, by Theorem  \ref{thm:tover2}, $\deg F > t/2$.
\end{proof}

\begin{lem}
\label{lem:gam1upper}
Let $(X,\cB)$ be a $t$-$(v,k,\lambda)$ design. Let $s$ denote the smallest integer such that 
$\binom{v}{s} > | \cB |$. Then $\gamma_1(\cB) \le s$.
\end{lem}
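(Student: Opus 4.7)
My plan is a dimension-count in the space of homogeneous multilinear forms of degree exactly $s$. Let
\[
V \ =\ \spn_{\cx}\bigl\{\, x^{C} \ : \ C\subseteq X,\ |C|=s \,\bigr\},
\]
so $\dim V=\binom{v}{s}$. Evaluation on the block set gives a linear map $\varepsilon_{\cB}:V\to\cx^{\cB}$, $f\mapsto(f(\bc_{B}))_{B\in\cB}$, whose image has dimension at most $|\cB|$. By hypothesis $\binom{v}{s}>|\cB|$, so $\ker\varepsilon_{\cB}$ contains some non-zero $f$; this $f$ vanishes on $\cB$ and has degree $\le s$, and it will certify $\gamma_{1}(\cB)\le s$ as soon as we know $f\notin\cT$.

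The remaining task is thus to verify that $f$ is non-trivial. First I would observe that the choice of $s$ already forces $s\le\min(k,v-k)$: because $(X,\cB)$ is non-complete in the sense of Definition \ref{def:gamma12}, $|\cB|<\binom{v}{k}=\binom{v}{v-k}$, and since $j\mapsto\binom{v}{j}$ is non-decreasing on $[0,\lfloor v/2\rfloor]$ with $\min(k,v-k)\le\lfloor v/2\rfloor$, the smallest $s$ satisfying $\binom{v}{s}>|\cB|$ satisfies $s\le\min(k,v-k)$. Now if $f=\sum_{|C|=s}a_{C}x^{C}$ lay in $\cT$, then Lemma \ref{lem:idealofk-sets} would give $f(\bc_{B})=0$ for every $B\in\cK^{v}_{k}$, so the coefficient vector $(a_{C})_{|C|=s}$ would lie in the kernel of the $\binom{v}{s}\times\binom{v}{k}$ set-inclusion matrix whose $(C,B)$-entry is $[C\subseteq B]$. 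In our range $s\le\min(k,v-k)$ this matrix has full row rank $\binom{v}{s}$: equivalently, in the Johnson scheme $J(v,k)$ the standard modules satisfy $\dim(V_{0}\oplus V_{1}\oplus\cdots\oplus V_{s})=\binom{v}{s}$, which is exactly the identification described in Section \ref{sec:background}. Hence $(a_{C})\equiv 0$, contradicting $f\ne 0$, and so $f\notin\cT$.

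The main obstacle will be supplying this full-row-rank input for the set-inclusion matrix in the stated range; without it, the polynomial produced by the dimension count could happen to live inside $\cT$ and witness nothing new. One can supply this either by citing the classical theorem of Gottlieb on set-inclusion matrices or by appealing to the Johnson-scheme decomposition already referenced in Section \ref{sec:background}. The rest of the argument is routine linear algebra; notably, it does not invoke the $t$-design hypothesis itself, only the non-completeness of $\cB$.
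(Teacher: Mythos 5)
Your proof is correct and takes essentially the same dimension-counting approach as the paper's. The paper works with multilinear polynomials of degree at most $s$ with zero constant term and asserts that the space of \emph{functions} they induce on $\cK^v_k$ has dimension exactly $\binom{v}{s}$; this assertion hides the same Gottlieb-type full-rank input you supply explicitly for the $s$-vs-$k$ set-inclusion matrix, and the paper does not spell out the observation that $s \le \min(k,v-k)$, which is needed for that rank statement to hold. Your formulation in terms of homogeneous degree-$s$ forms is a touch cleaner, and your verification that non-completeness of $\cB$ keeps $s$ in the range where the full-rank theorem applies is a genuine detail that the paper leaves to the reader. In fact the closing sentence of the paper's proof (``Being multilinear with zero constant term, $f(\bx)$ is non-trivial'') is not, taken literally, a sufficient reason --- a non-zero multilinear polynomial with zero constant term can certainly vanish on all of $\cK^v_k$ --- and the Gottlieb/Johnson-scheme argument you give is the one actually required to close that gap.
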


\begin{proof}
The vector space of functions on $\cK^v_k$ representable by multilinear polynomials in $\cR$ with zero 
constant term and total degree at most $s$ has dimension $\binom{v}{s}$.  
For the chosen value of $s$, there exists a non-zero multilinear polynomial $f(\bx) \in \cR$, 
of total degree at most $s$,  
which vanishes on each element of $\cB$. (We have $|\cB|$ equations and $\binom{v}{s}$ unknowns.) 
Being multilinear with zero constant term, $f(\bx)$ 
is non-trivial with degree at most $s$. 
\end{proof}

We finish this section with two instructive examples.

\begin{example}
Let us construct the ideal of the Fano plane. Let $X = \ints_7$ and take 
$$\cB = \left\{   \{  0,1,3 \},\{  1,2,4  \},\{  2,3,5  \},\{  3,4,6  \},\{  4,5,0  \},\{  5,6,1  \},\{  6,0,2   \}  \right\}.$$
Starting from $\cG_0$, let us build up a meaningful generating
set for $\cI(\cB)$. The unique triple in $\cB$ containing both $0$ and $1$ also contains $3$; this 
combinatorial condition may be encoded as $x_0x_1 - x_0x_1x_3 \in \cI(\cB)$. Alternatively,
including the quadratic polynomial $x_0x_1 - x_0x_3$ in a generating set $\cG$ for our ideal 
also guarantees that any vector $\bc \in \cZ(\langle \cG \rangle )$ with $\bc_0=1$ and $\bc_1=1$
must have $\bc_3=1$ as well. Up to sign, there are $\binom{7}{2}$ quadratic generators of this form and these,
together with those in $\cG_0$, generate the full ideal. 
\end{example}

\begin{example}
With $X=\{1,\ldots,9\}$ and $\cB=\left\{   \{ 1,2,3 \},\{ 4,5,6 \},\{ 7,8,9 \} \right\}$, a 1-design, we find generating set 
$$ \cG = \cG_0 \cup \left\{ x_1 - x_2, \ x_1-x_3, \ \ x_4-x_5, \ x_4-x_6, \ \ x_7-x_8, \ x_7-x_9 \right\}.$$
While we will not see further examples where the ideal is generated by $\cG_0$ and linear polynomials, we
will see that the difference of two monomials appears again as a useful tool.
\end{example}

\section{Radical ideals}
\label{sec:radical}

In this section, we deal with a technicality which arises as we compute ideals of finite sets. We show 
here that every ideal containing the trivial ideal is radical, thereby eliminating any further need to check 
this property. 

Given a finite set
$\cS$ of points in $\cx^v$, it is often easy to come up with polynomials that vanish at each of those points and, 
with  a bit of work,  we might find a generating set $\cG$ for some ideal $\sJ = \langle \cG \rangle$ whose 
zero set is exactly $\cS$: $\cZ( \langle \cG \rangle )  = \cS$. Hilbert's Nullstellensatz then tells us that
\begin{equation}
\label{eqn:0satz}
 \cI ( \cS ) = \cI ( \cZ (\sJ ) ) = \Rad( \sJ ),
\end{equation}
the \emph{radical} of of ideal $\sJ$ given by 
$$ \Rad( \sJ) = \left\{  f \in \cR \mid \left( \exists n \in \nats \right) \left( f^n \in \sJ \right) \right\} . $$
The ideal $\sJ$ is a \emph{radical ideal} if $\Rad( \sJ) = \sJ$. Our goal then is achieved in three steps: 
given a finite set of points $\cS$,
find a nice set $\cG$ of small-degree polynomials that vanish on $\cS$; verify that $\cZ(\cG) = \cS$ and nothing 
more; verify also that $\langle \cG\rangle$ is a radical ideal. In this section, we discuss ways to achieve this last step.

If $\sJ$ is an ideal in $\cx[x_1,\ldots, x_v]$ with finite zero set $\cZ(\sJ) = \cS$, then $\cx[\bx]/\sJ$ is a finite-dimensional complex vector space and its dimension
is equal to the sum of the multiplicities of all the zeros of $\cI$, 
$\dim  \cx[\bx]/\sJ = \sum_{\bc \in \cS} \mult (\bc)$. The \emph{coordinate ring} 
of a variety $\cS \subseteq \cx^v$ is defined
as the quotient ring $\cx[\bx]/\cI(\cS)$ and this is naturally identified with the ring of ``polynomial functions''
on the set $\cS$. If $\sJ$ is an ideal in $\cx[\bx]$ with finite zero set $\cZ(\sJ)= \cS$, then it is well-known (e.g., 
Section 5.3, Proposition 7 in \cite{iva}) that
\begin{equation}
\label{eqn:countmults}
 \sum_{\bc \in \cS} \mult( \bc) = \dim \cx[\bx]/\sJ \ge  \dim \cx[\bx]/\Rad( \sJ) = |\cS| 
\end{equation}
where $\mult (\bc)$ is the multiplicity of point $\bc$ as a zero of $\sJ$.  This proves

\begin{prop}
\label{prop:coordring} 
With notation as above:
\begin{itemize}
\item[(i)]
If $\cS \subseteq \cZ( \sJ)$, then $\dim  \cx[\bx]/\sJ  \ge |\cS|$;
\item[(ii)]  If $\sJ$ is a radical ideal and $\cZ(\sJ)=\cS$ is finite, then $\sJ = \cI(\cS)$;
\item[(iii)]
If  $\sJ$ is an ideal in $\cx[\bx]$ with finite zero set $\cZ(\sJ) \supseteq \cS$ and the coordinate ring
$\cx[\bx]/\sJ $ has dimension equal to $|\cS|$, then the ideal $\sJ$ is radical, $\cZ(\sJ) = \cS$, each point of $\cS$ is a smooth
point (multiplicity one), and $\cI(\cS)= \sJ$.  $\Box$
\end{itemize}
\end{prop}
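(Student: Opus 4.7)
The plan is to prove (i) by evaluation at the points of $\cS$, obtain (ii) as a one-line consequence of the Nullstellensatz, and then assemble (iii) from (i), (ii), and the multiplicity inequality (\ref{eqn:countmults}) already in the text.

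For (i), I would note that $\cS \subseteq \cZ(\sJ)$ implies $\sJ \subseteq \cI(\cS)$, so there is a canonical surjection $\cx[\bx]/\sJ \twoheadrightarrow \cx[\bx]/\cI(\cS)$ and it suffices to bound $\dim\cx[\bx]/\cI(\cS)$ from below. I would do this by exhibiting the evaluation map $\cx[\bx]/\cI(\cS) \to \cx^\cS$ sending a coset $f+\cI(\cS)$ to the tuple $(f(\bc))_{\bc \in \cS}$; it is well-defined and injective essentially by the definition of $\cI(\cS)$, and it is surjective by Lagrange interpolation at the finite set $\cS$ of distinct points of $\cx^v$. Hence it is a linear isomorphism, giving $\dim\cx[\bx]/\cI(\cS)=|\cS|$ and therefore $\dim\cx[\bx]/\sJ \ge |\cS|$. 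Statement (ii) follows at once from the Nullstellensatz (equation (\ref{eqn:0satz})): if $\sJ$ is radical and $\cZ(\sJ)=\cS$, then $\cI(\cS)=\cI(\cZ(\sJ))=\Rad(\sJ)=\sJ$.

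For (iii), the inequality (\ref{eqn:countmults}) applied to $\sJ$ (whose zero set is finite by hypothesis) gives $\dim\cx[\bx]/\sJ \ge |\cZ(\sJ)| \ge |\cS|$, and the assumption $\dim\cx[\bx]/\sJ=|\cS|$ forces equality throughout. In particular $|\cZ(\sJ)|=|\cS|$, and since $\cS\subseteq \cZ(\sJ)$ and both are finite, $\cZ(\sJ)=\cS$. Reading (\ref{eqn:countmults}) once more now yields $\sum_{\bc \in \cS}\mult(\bc)=|\cS|$, so every multiplicity is exactly one and each point of $\cS$ is a simple zero of $\sJ$. Finally, (\ref{eqn:countmults}) also supplies $\dim\cx[\bx]/\Rad(\sJ)=|\cS|=\dim\cx[\bx]/\sJ$; together with $\sJ\subseteq \Rad(\sJ)$ this makes the canonical surjection $\cx[\bx]/\sJ \twoheadrightarrow \cx[\bx]/\Rad(\sJ)$ an isomorphism of finite-dimensional vector spaces, so $\sJ=\Rad(\sJ)$. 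A final invocation of (ii) then concludes $\sJ=\cI(\cS)$. No step presents a serious obstacle; the only point that merits explicit care is the surjectivity of the evaluation map in (i), and this is precisely Lagrange interpolation at a finite set of distinct points.
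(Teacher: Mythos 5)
Your proof is correct and follows essentially the same route the paper intends: all three parts are read off from the multiplicity inequality~(\ref{eqn:countmults}) plus the Nullstellensatz identity~(\ref{eqn:0satz}), with the dimension squeeze in (iii) forcing equality at every step. The only departure is in (i), where you give a self-contained argument (surjection $\cx[\bx]/\sJ \twoheadrightarrow \cx[\bx]/\cI(\cS)$ followed by the evaluation map and multivariate Lagrange interpolation) rather than just quoting the equality $\dim \cx[\bx]/\Rad(\sJ)=|\cS|$ from~(\ref{eqn:countmults}); this is in fact the standard proof of that equality, and it has the minor advantage of not needing a separate remark that the bound is trivial when $\cZ(\sJ)$ is infinite.
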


Let $\cG=\{f_1,\ldots, f_\ell\}$ be a generating set for ideal $\sJ \subseteq \cx[x_1,\ldots,x_v]$.  The 
\emph{Jacobian} of the system $\{f_1(\bx)=0, \ldots, f_\ell(\bx)=0\}$ of polynomial equations evaluated at 
point $\bc \in \cx^v$ is the $v\times \ell$ matrix $\Jac(\cG, \bc)$ with $(i,j)$-entry equal to
$\left. \frac{ \partial f_j }{\partial x_i } \right|_{\bc}$,   the partial derivative $\partial f_j/\partial x_i$ evaluated 
at point $\bc$. Since we are dealing only with  zero-dimensional varieties in this paper, we say the point 
$\bc$ is \emph{smooth} if $\Jac(\cG, \bc)$ has column rank $v$, so that the Zariski tangent space is 
zero-dimensional. A smooth point has multiplicity one. So another way to  show that the ideal $\sJ$ is 
radical is to check that, at each point $\bc$ of its zero set, the Jacobian 
$\Jac(\cG, \bc)$ has full row rank where $\cG$ is some generating set for $\sJ$.  

\begin{prop}
\label{prop:allradical}
Let $\cG \subseteq \cx[\bx]$ be any set of polynomials such that $\cG_0 \subseteq \cG$ (cf. Equation (\ref{eqn:Gtrivial})). Then $\langle \cG \rangle$ is a radical ideal.
\end{prop}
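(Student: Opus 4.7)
The plan is to reduce the claim to the Jacobian/smoothness criterion encoded in Proposition~\ref{prop:coordring}(iii), exploiting the fact that we already understand $\cZ(\langle \cG_0 \rangle)$ completely and that we already verified the Jacobian has full rank there in the proof of Lemma~\ref{lem:idealofk-sets}.

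First, I would observe that since $\cG_0 \subseteq \cG$, we have $\langle \cG_0 \rangle \subseteq \langle \cG \rangle$, and therefore
$$\cS := \cZ(\langle \cG \rangle) \subseteq \cZ(\langle \cG_0 \rangle) = \{ \bc_B \mid B \in \cK^v_k \},$$
which is finite. So $\langle \cG \rangle$ is a zero-dimensional ideal with $\cS$ finite, and we are in the setting where Proposition~\ref{prop:coordring}(iii) applies.

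Next, the key step: I would show every point of $\cS$ is smooth with respect to the generating set $\cG$. Pick any $\bc \in \cS$. Then $\bc = \bc_B$ for some $k$-subset $B$, and the proof of Lemma~\ref{lem:idealofk-sets} exhibits an explicit $v \times (v+1)$ submatrix of $\Jac(\cG, \bc)$, namely $\Jac(\cG_0, \bc)$, with column rank $v$. Since the columns of $\Jac(\cG, \bc)$ include those of $\Jac(\cG_0, \bc)$, the full Jacobian $\Jac(\cG, \bc)$ also has column rank $v$. Thus $\bc$ is a smooth point of $\langle \cG \rangle$ and contributes multiplicity one.

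Finally, combining smoothness with the multiplicity count (\ref{eqn:countmults}) gives
$$ \dim \cx[\bx]/\langle \cG \rangle \;=\; \sum_{\bc \in \cS} \mult(\bc) \;=\; |\cS|,$$
so Proposition~\ref{prop:coordring}(iii) immediately yields that $\langle \cG \rangle$ is radical (and, as a bonus, equals $\cI(\cS)$). The only real thing to verify is that adjoining extra generators to $\cG_0$ cannot destroy the full-rank property of the Jacobian at the remaining common zeros, which is obvious because appending columns can only increase column rank; so there is no genuine obstacle, and the argument is essentially a one-line appeal to the smoothness criterion established in Section~\ref{sec:radical}.
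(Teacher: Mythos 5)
Your proposal is correct and takes essentially the same route as the paper: both identify $\Jac(\cG_0, \bc_B)$ (shown in Lemma~\ref{lem:idealofk-sets} to have column rank $v$) as a submatrix of $\Jac(\cG, \bc_B)$, conclude that every point of the finite zero set is smooth, and then deduce radicality. The only difference is that you spell out the final step (multiplicity-one at every point $\Rightarrow \dim \cx[\bx]/\langle \cG\rangle = |\cS| \Rightarrow$ radical via Proposition~\ref{prop:coordring}(iii)), whereas the paper leaves that implicit after having set it up in the surrounding discussion in Section~\ref{sec:radical}.
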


\begin{proof}
In the proof of Lemma \ref{lem:idealofk-sets}, we proved that the Jacobian $\Jac(\cG_0, \bc_B)$ of $\cG_0$
has column rank equal to $v$ at any characteristic vector $\bc_B$ of any $k$-set $B$. Since 
 $\Jac(\cG_0, \bc_B) $ is a submatrix of  $ \Jac(\cG, \bc_B)$, this latter Jacobian also has full row rank and 
 each point of the finite variety $\cZ( \cG)$ is smooth. 
\end{proof}

We note here that this approach also provides another proof of the fundamental bound of 
Ray-Chaudhuri and  Wilson.

\begin{lem}
\label{lem:monposet}
Let $S  \subseteq \{1,\ldots,v\}$ and $t\ge |S|$. If $\sI$ is an ideal containing $\cT$, then in the
quotient ring $\cx[\bx]/\sI$,  the coset $x^S + \sI$ is expressible
as a linear combination of cosets $x^T + \sI$ where $|T|=t$.
\end{lem}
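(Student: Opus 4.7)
The plan is to induct on $t - |S|$, driving everything with the two congruences that come from $\cT \subseteq \sI$: namely $x_i^2 \equiv x_i \pmod{\sI}$ for every $i$, and $x_1 + \cdots + x_v \equiv k \pmod{\sI}$. The base case $|S| = t$ is trivial, since $x^S$ is already a degree-$t$ monomial, so it is its own linear combination.

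For the inductive step, suppose $|S| < t$. The key move is to multiply $x^S$ by $x_1 + \cdots + x_v$ and split the sum according to whether each index $i$ lies in $S$ or not. For $i \in S$, the relation $x_i^2 \equiv x_i$ collapses $x_i x^S$ back down to $x^S$; for $i \notin S$, the product is simply $x^{S \cup \{i\}}$. Replacing the sum $x_1 + \cdots + x_v$ with $k$ on the left then yields the recurrence
\[
(k - |S|)\, x^S \;\equiv\; \sum_{i \notin S} x^{S \cup \{i\}} \pmod{\sI}.
\]
In the regime $|S| < t \le k$, the scalar $k - |S|$ is nonzero, so we may solve for $x^S$ as an explicit rational linear combination of the $v - |S|$ monomials indexed by the sets $S \cup \{i\}$, each of size $|S|+1 \le t$. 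Applying the inductive hypothesis to each $x^{S \cup \{i\}}$ rewrites the whole expression as a linear combination of cosets $x^T + \sI$ with $|T| = t$, completing the step.

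The one subtlety worth flagging is the implicit hypothesis $t \le k$: at $|S| = k$ the coefficient $k - |S|$ vanishes and the recurrence degenerates to $\sum_{i \notin S} x^{S \cup \{i\}} \equiv 0$, which does not lift $x^S$ any higher by this mechanism. In all the applications in view ($t$-designs, Ray--Chaudhuri--Wilson-style bounds) one has $t \le k$ automatically, so this does no harm; the main obstacle in the proof, such as it is, is really just spotting the right multiplication to perform, after which the induction is essentially mechanical.
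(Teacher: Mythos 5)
Your proof is correct and is essentially the same argument as the paper's: multiply $x^S$ by $\frac{1}{k}\sum_j x_j \equiv 1$, split the sum over $j \in S$ (where $x_j^2 \equiv x_j$ collapses the term) versus $j \notin S$, solve the resulting recurrence $(k - |S|)\,x^S \equiv \sum_{i \notin S} x^{S \cup \{i\}}$ for $x^S$, and iterate. You are also right to flag that the argument implicitly needs $t \le k$ so that the coefficient $k - |S|$ stays nonzero along the way; the paper leaves this hypothesis tacit (it always holds in the applications), so noticing it is a small improvement in rigor rather than a deviation in method.
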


\begin{proof}
Assume $t=|S|+1$. Since $ 1 + \sI  =  \frac{1}{k} \sum_j x_j + \sI$ we have 
$$ x^S+\sI = \prod_{s\in S} x_s + \sI =  \frac{1}{k} \sum_j x_j   \prod_{s\in S} x_s  + \sI = \left( \frac{t-1}{k} x^S + \sI 
\right) +   \left( \sum_{ \stackrel{S\subseteq T  }{ |T|=t} } x^T \right) + \sI $$
and we may solve for $x^S + \sI$.
\end{proof}

\begin{thm}
\label{thm:raychwilson}
If $(X,\cB)$ is a $2s$-$(v,k,\lambda)$ design, then $|\cB| \ge \binom{v}{s}$.
\end{thm}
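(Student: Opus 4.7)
The plan is to lower-bound $|\cB|$ by exhibiting $\binom{v}{s}$ linearly independent cosets in the coordinate ring $\cR/\cI(\cB)$. Since the vectors $\{\bc_B \mid B\in \cB\}$ are distinct $01$-vectors, the evaluation map $\varepsilon: \cR \to \cx^{\cB}$ from the introduction is surjective (for any target block $B_0$ one can use $\prod_{i\in B_0}x_i \prod_{i\notin B_0}(1-x_i)$ to produce its indicator), so
\[ \dim_\cx \cR/\cI(\cB) \;=\; |\cB|. \]

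Next, consider the $\binom{v}{s}$ cosets $\{x^T + \cI(\cB) \mid T \subseteq X,\; |T|=s\}$. Suppose a combination $f(\bx)=\sum_{|T|=s} a_T\, x^T$ lies in $\cI(\cB)$. Since $\deg f \le s = t/2$, Theorem~\ref{thm:tover2} forces $f$ to be trivial: $f \in \cT = \cI(\cK^v_k)$, using Lemma~\ref{lem:idealofk-sets}. Evaluating $f$ at the incidence vector $\bc_B$ of each $k$-subset $B \subseteq X$ therefore yields the linear system
\[ \sum_{\substack{T \subseteq B \\ |T|=s}} a_T \;=\; 0 \qquad \text{for every $k$-subset } B \subseteq X. \]

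The remaining step --- which is the only substantive obstacle --- is to conclude from this system that every $a_T$ vanishes; equivalently, the $\binom{v}{s} \times \binom{v}{k}$ set-inclusion matrix $W_{s,k}$ has full row rank $\binom{v}{s}$. This is a classical rank theorem of Gottlieb (1966), valid under the natural hypothesis $s \le v-k$ (a condition implicit in any non-degenerate $2s$-$(v,k,\lambda)$ design). I would either cite Gottlieb or supply a self-contained proof by recognizing $W_{s,k}\, W_{s,k}^{\top}$ as an element of the Bose--Mesner algebra of the Johnson scheme $J(v,k)$ whose eigenvalues on the eigenspaces $V_0,\ldots,V_s$ are positive. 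Granted this, the $\binom{v}{s}$ cosets above are linearly independent in $\cR/\cI(\cB)$, and combining with the dimension computation of the first paragraph gives $|\cB| = \dim_\cx \cR/\cI(\cB) \ge \binom{v}{s}$.
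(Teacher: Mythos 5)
Your proof follows the same strategy as the paper's: compute $\dim \cx[\bx]/\cI(\cB) = |\cB|$ and exhibit $\binom{v}{s}$ linearly independent cosets represented by the degree-$s$ square-free monomials. The difference is that the paper's one-sentence proof simply \emph{asserts} that the cosets $x^C + \cI(\cB)$, $|C|=s$, are linearly independent, whereas you correctly point out that this does not follow from Theorem~\ref{thm:tover2} alone. Theorem~\ref{thm:tover2} only gets you to ``$f=\sum_{|T|=s}a_T x^T \in \cI(\cB)$ implies $f\in\cT$''; to conclude $f=0$ one must further know that the trivial ideal $\cT=\cI(\cK^v_k)$ contains no nonzero $\cx$-linear combination of degree-$s$ square-free monomials, which is precisely the statement that the $(s,k)$-inclusion matrix $W_{s,k}$ has full row rank $\binom{v}{s}$ when $s\le\min(k,v-k)$. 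Your appeal to Gottlieb's rank theorem (or to positivity of the eigenvalues of $W_{s,k}W_{s,k}^{\top}$ in the Bose--Mesner algebra of $J(v,k)$) is the right way to close this gap; the paper itself leans on the same fact implicitly, deferring in the surrounding discussion to the proof of Theorem~1 of Ray-Chaudhuri and Wilson for the rank of $A^{(s)}$. Your opening surjectivity argument for the evaluation map (using $\prod_{i\in B_0}x_i\prod_{i\notin B_0}(1-x_i)$) is correct and is an elementary replacement for the paper's more general appeal to properties of radical ideals of finite varieties. So the argument is sound and is essentially the paper's argument with the genuinely load-bearing, but unstated, inclusion-matrix rank step made explicit.
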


\begin{proof}
Since the coordinate ring $\cx[\bx]/\cI(\cB) $ has dimension $|\cB|$
and the monomials $\{ x^C  : |C| = s \}$ represent linearly independent cosets in this quotient ring, we have
 $|\cB| \ge \binom{v}{s}$. 
\end{proof}

This language differs from that employed by Ray-Chaudhuri and Wilson. Consider the $\binom{v}{s} \times |\cB|$ matrix $A^{(s)}$ with rows indexed by all $C \subseteq X$ with $|C|=s$, with columns indexed by the set 
$\cB$ of blocks  of a $t$-($v,k,\lambda)$ design, and $(C,B)$-entry equal to one if $C\subseteq B$ and equal
to zero otherwise. The proof of Theorem 1 in \cite{raychw} establishes that the columns 
$\bc_B$ of $A^{(s)}$ span the space $\re^{\binom{v}{s}}$. The celebrated bound follows immediately for even $t$ and, for odd $t$, one obtains $|\cB| \ge 2 \binom{v}{s}$ whenever $\cB$ is the block set of a $t$-($v,k,\lambda$) design with $t=2s+1$ by applying this idea to both the derived design and the residual design. 
As we will use the fact later, we record here a 

\begin{lem}
\label{lem:RCW}
Let $(X,\cB)$ be a $t$-($v,k,\lambda$) design with $t\ge 2s$ and consider the incidence matrix $A^{(s)}$ of $s$-subsets of $X$ versus blocks as defined in the previous paragraph. Then the column rank of $A^{(s)}$ is exactly $\binom{v}{s}$.  $\Box$
\end{lem}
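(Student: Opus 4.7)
My plan is as follows. The matrix $A^{(s)}$ has $\binom{v}{s}$ rows, so proving its column rank equals $\binom{v}{s}$ is equivalent to proving its rows are linearly independent. I would note that the row indexed by $C \in \binom{X}{s}$ has $B$-entry $\mathbf{1}[C\subseteq B] = x^C(\bc_B)$, so any nontrivial dependence among the rows is precisely a nonzero polynomial $f(\bx) = \sum_{|C|=s} \alpha_C\, x^C$ lying in $\cI(\cB)$.

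Since $\deg f \le s \le t/2$, Theorem~\ref{thm:tover2} forces $f \in \cT$, and by Lemma~\ref{lem:idealofk-sets} this says $f$ vanishes on the incidence vector of \emph{every} $k$-subset of $X$, not only those in $\cB$. Equivalently, $(\alpha_C)_C$ lies in the left kernel of the full $(s,k)$-inclusion matrix $W_{s,k}$ whose columns are indexed by $\cK^v_k$. I would then conclude by invoking the classical rank theorem of Gottlieb: $\rank W_{s,k} = \binom{v}{s}$ whenever $s \le \min(k, v-k)$. In our setting $s \le k$ follows from $k \ge t \ge 2s$, while $s \le v-k$ is the standard non-triviality condition on a $2s$-design, already implicit in Theorem~\ref{thm:raychwilson}.

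The hard part is the appeal to the rank of $W_{s,k}$. In fact, Lemma~\ref{lem:RCW} is essentially a matrix-language restatement of the linear independence of the cosets $\{x^C + \cI(\cB) : |C|=s\}$ in the coordinate ring that drives the proof of Theorem~\ref{thm:raychwilson}, so the two facts rest on the same ingredient. To keep the argument self-contained, I would prove the Gottlieb statement by induction on $s$, using the identity $\sum_{T \supset S,\,|T|=|S|+1} x^T \equiv (k - |S|)\, x^S \pmod{\cT}$ from the proof of Lemma~\ref{lem:monposet} to descend from a purported dependence at level $s$ to one at level $s-1$, eventually reaching the base case where $1 + \cT \ne 0$ in $\cR / \cT$.
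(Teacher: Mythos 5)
Your main reduction is sound and takes a genuinely different route from the paper's. The paper does not prove this lemma at all: it simply notes, in the preceding paragraph, that the proof of Theorem 1 in \cite{raychw} establishes that the columns of $A^{(s)}$ span $\re^{\binom{v}{s}}$, and closes the statement with $\Box$. Your derivation --- read a row dependence as a polynomial $f=\sum_{|C|=s}\alpha_C x^C\in\cI(\cB)$ of degree $\le s\le t/2$, force $f\in\cT$ via Theorem~\ref{thm:tover2}, and then appeal to the full row rank of the inclusion matrix $W_{s,k}$ --- is correct, reuses the paper's own machinery, and correctly isolates the one external ingredient (Gottlieb's rank theorem) that also silently underlies the linear-independence assertion in the proof of Theorem~\ref{thm:raychwilson}. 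Your observation that $s\le v-k$ must be assumed (i.e.\ a non-triviality condition) is the right technical caveat.

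However, your last paragraph has a real gap. The identity $\sum_{T\supset S,\,|T|=|S|+1}x^T\equiv(k-|S|)\,x^S\pmod{\cT}$ rewrites one level-$(s-1)$ coset as a combination of level-$s$ cosets, i.e.\ it \emph{ascends}; it does not turn an arbitrary level-$s$ dependence into a level-$(s-1)$ one. If you try to descend by replacing $\alpha$ with its down-shadow $\beta_S=\sum_{C\supset S}\alpha_C$, you cannot conclude $\alpha=0$ from $\beta=0$: the down-shadow map $\re^{\binom{v}{s}}\to\re^{\binom{v}{s-1}}$ has nontrivial kernel whenever $\binom{v}{s}>\binom{v}{s-1}$, which is exactly the regime $2s\le v$ that the hypothesis forces. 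Likewise, writing $\sum_{|C|=s}\alpha_C x^C$ in the form $\sum_S\mu_S\sum_{T\supset S}x^T$ is only possible when $\alpha$ lies in the image of the up-shadow, a proper subspace of $\re^{\binom{v}{s}}$. So the naive induction on $s$ does not close. You should either cite Gottlieb's theorem as a black box, or prove it by a method that actually works --- e.g.\ by checking that the Gram matrix $W_{s,k}W_{s,k}^{\top}$ has strictly positive eigenvalues (a Johnson-scheme computation) so that $W_{s,k}$ has independent rows, or by a double induction on $(v,s)$ obtained from splitting $f=x_if_1+f_0$ and specializing $x_i\in\{0,1\}$ to pass to $\cK^{v-1}_k$ and $\cK^{v-1}_{k-1}$, which still requires separate treatment of the boundary case $s=v-k$.
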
 

\section{Steiner systems and partial designs}
\label{sec:Steiner}

For a subset $C \subseteq X$ and $f \in \cR$, define $f(C)$ in the obvious way,
by setting $x_i = 1$ if $i \in C$ and $x_i = 0$ if $i \not\in C$
($1 \leq i \leq v$), and
then evaluating $f(\chi_C )$ where $\chi_C = (x_1, \dots , x_v)$.

We want to construct small-degree generating sets $\cG$ for the ideal $\sI = \cI(\cB)$ where $\cB$ is the block
set of our design $(X,\cB)$. We assume throughout that $\cG$ contains $\cG_0$ so that 
$\langle \cG\rangle$ contains the trivial ideal $\cT$. Every zero of this latter ideal is a $01$-vector with 
exactly $k$ ones. As we choose the remaining generators, we need only search for multilinear polynomials: 
each monomial $x_1^{e_1} \cdots x_v^{e_v}$ appearing in $f(\bx)$ has all $e_i \in \{0,1\}$.  It is clear that 
the automorphism group of a design $(X,\cB)$ acts on the ideal $\cI(\cB)$ by permuting indeterminates; rather than
minimizing $|\cG|$, we typically show a preference for sets of generators invariant under this action.

Recall, for a subset $C \subseteq X$, we have $x^C = \prod_{i \in C} x_i$.
Next define, for an integer $j \leq |C|$, the polynomial
\[ x^{C,j} = \sum_{J \subseteq C, |J| = j} x^J.\]
(This is a certain symmetric function --- the elementary symmetric polynomial of degree $j$ --- in the variables $\{x_i\mid i\in C\}$.)
For example, $ x^{C,|C|}=x^C$. Since the trivial ideal contains $\left( x^{X,1} - k\right)^j$ for all $j$ it also 
contains $x^{X,j} - \binom{k}{j}$ for $1\le j\le k$. For example, modulo $\langle \cG_0 \rangle$, 
\begin{eqnarray*} 
\left( x^{X,1} - k\right)^2 &=& \sum_{i=1}^v x_i^2 + 2 \left( \sum_{i<j} x_i x_j \right) -2k  \left( \sum_{i=1}^v x_i \right)
+ k^2 \\
 &\equiv&  \left(  \sum_{i=1}^v x_i - k  \right) + 2 \left( \sum_{i<j} x_i x_j - \binom{k}{2} \right) -2k  \left( \sum_{i=1}^v x_i - k \right) \\
 &\equiv&  \left(  x^{X,1} - k \right) + 2 \left(  x^{X,2} -   \binom{k}{2} \right) - 2k  \left(  x^{X,1} - k \right) .
\end{eqnarray*}

\begin{lem}
\label{lem:special}
Suppose that $B \subseteq X$, $|B| = k$  and 
$J \subseteq B$. Denote $j = |J|$. Define
\begin{equation}
\label{eqn:gbJ}
g_{B,J}(\bx) = x^{B,j}  - \binom{k}{j} x^J ~. 
\end{equation}
Then, for every $C \subseteq X$, we have that $g_{B,J}(C) \neq 0$
if and only if $j \leq |C \cap B| < k$.
\end{lem}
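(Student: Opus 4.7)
The plan is to evaluate $g_{B,J}$ directly on the characteristic vector $\chi_C$ of an arbitrary subset $C\subseteq X$ and then split into three cases according to the value of $m:=|B\cap C|$. First I would observe that since $x^{B,j}$ is the $j$\textsuperscript{th} elementary symmetric polynomial in the variables $\{x_i\mid i\in B\}$, substituting a $01$-vector gives
\[
x^{B,j}(C) \;=\; \binom{|B\cap C|}{j} \;=\; \binom{m}{j},
\]
simply counting the $j$-subsets of $B$ that lie inside $C$. Similarly $x^J(C)=1$ if $J\subseteq C$ and $0$ otherwise. Thus
\[
g_{B,J}(C) \;=\; \binom{m}{j} - \binom{k}{j}\,[J\subseteq C],
\]
where $[\cdot]$ is the Iverson bracket.

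Next I would handle the two easy cases. If $m<j$, then $\binom{m}{j}=0$; moreover $J\subseteq B$ together with $J\subseteq C$ would force $J\subseteq B\cap C$, giving $m\ge j$, a contradiction. So $[J\subseteq C]=0$ and $g_{B,J}(C)=0$. If $m=k$, then $B\subseteq C$ (as $|B|=k$), so $J\subseteq B\subseteq C$ yields $[J\subseteq C]=1$, and both terms equal $\binom{k}{j}$, giving $g_{B,J}(C)=0$.

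Finally, in the remaining range $j\le m<k$ I need to argue that $g_{B,J}(C)\neq 0$. If $J\not\subseteq C$ then $g_{B,J}(C)=\binom{m}{j}$, which is strictly positive since $m\ge j\ge 1$. If $J\subseteq C$ then $g_{B,J}(C)=\binom{m}{j}-\binom{k}{j}$, and the strict monotonicity of $\binom{n}{j}$ in $n$ for $n\ge j\ge 1$ gives $\binom{m}{j}<\binom{k}{j}$, so $g_{B,J}(C)\neq 0$. Combining the three cases establishes the biconditional. (One should implicitly assume $j\ge 1$; if $j=0$ then $g_{B,J}$ is identically zero and the statement is vacuous.) There is no genuine obstacle here---the argument is just bookkeeping---but I would be careful to verify that $J\subseteq B$ makes the implication $J\subseteq C\Rightarrow m\ge j$ available in the first case.
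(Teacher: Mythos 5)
Your proof is correct and follows essentially the same case analysis as the paper (split on $m=|B\cap C|$ into the ranges $m<j$, $m=k$, and $j\le m<k$), but you make the crucial computation $x^{B,j}(C)=\binom{m}{j}$ explicit, which tightens the paper's somewhat informal assertion in the middle range that ``some proper subset of the monomials take nonzero value, and $g_{B,J}(C)$ cannot equal $0$.'' Your aside about $j\ge 1$ is a genuine (if minor) observation: for $j=0$ the polynomial $g_{B,J}$ is identically zero, so the stated biconditional actually fails rather than being vacuous, but this is harmless since the lemma is only ever invoked with $j=t\ge 1$.
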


\begin{proof}
If $|C \cap B| < j$, then every monomial appearing in $g_{B,J}(\bx)$
evaluates to $0$ on $\chi_C$. If $|C \cap B| = k$, then all the monomials
in $g_{B,J}(\bx)$ take nonzero values on $\chi_C$, and 
$g_{B,J}(C) = \binom{k}{j} (1) - (1) \binom{k}{j} = 0$.
If $j \leq |C \cap B| < k$, then some proper subset of 
the monomials occurring in $g_{B,J}(\bx)$ take nonzero value, and $g_{B,J}(C)$ cannot
equal $0$.
\end{proof}

\begin{thm} 
\label{thm:k-unif}
For any $k$-uniform hypergraph $(X,\cB)$,
$\gamma_2( \cB)  \leq k$.
\end{thm}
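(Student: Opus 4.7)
The plan is to exhibit an explicit generating set for $\cI(\cB)$ consisting entirely of polynomials of degree at most $k$. The natural candidate is
$$ \cG = \cG_0 \cup \left\{ x^S \;:\; S \in \binom{X}{k} \setminus \cB \right\}, $$
where each $k$-set that is \emph{not} a block of $\cB$ contributes one monomial $x^S = \prod_{i \in S} x_i$ of degree $k$. If I can show that $\langle \cG \rangle = \cI(\cB)$, the bound $\gamma_2(\cB) \le k$ follows immediately.

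The first step is to check the easy containment $\langle \cG \rangle \subseteq \cI(\cB)$. Every $\chi_B$ with $B \in \cB$ is a zero of $\cG_0$ by Lemma \ref{lem:idealofk-sets}. For a generator $x^S$ with $S \in \binom{X}{k} \setminus \cB$ and any block $B \in \cB$, the value $x^S(\chi_B) = \prod_{i \in S} (\chi_B)_i$ equals $1$ precisely when $S \subseteq B$; but $|S| = |B| = k$ forces $S = B$, contradicting $S \not\in \cB$, so in fact $x^S(\chi_B) = 0$. Hence each element of $\cG$ vanishes on $\{\chi_B : B \in \cB\}$.

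The second step is to identify the zero set $\cZ(\langle \cG \rangle)$. Since $\cG_0 \subseteq \cG$, Lemma \ref{lem:idealofk-sets} forces $\cZ(\langle \cG \rangle) \subseteq \{\chi_C : C \in \binom{X}{k}\}$. For $C \notin \cB$, the generator $x^C \in \cG$ gives $x^C(\chi_C) = 1 \neq 0$, so $\chi_C$ is excluded; for $C \in \cB$, every element of $\cG$ vanishes at $\chi_C$ by the previous step. Consequently $\cZ(\langle \cG \rangle) = \{\chi_B : B \in \cB\}$.

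Finally, I invoke the machinery of Section \ref{sec:radical}. Because $\cG_0 \subseteq \cG$, Proposition \ref{prop:allradical} forces $\langle \cG \rangle$ to be radical, and then Proposition \ref{prop:coordring}(ii) applied to the finite zero set just pinned down yields $\langle \cG \rangle = \cI(\cZ(\langle \cG \rangle)) = \cI(\cB)$. I see no real obstacle in this argument; it is essentially formal, and its bluntness is unavoidable since no combinatorial structure of $\cB$ was used. The special polynomials $g_{B,J}$ of Lemma \ref{lem:special} are not needed for this universal bound, but they will presumably be the key tool for pushing $\gamma_2(\cB)$ substantially below $k$ when $(X,\cB)$ is a structured design such as a Steiner system.
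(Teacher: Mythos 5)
Your proof is correct; the verification steps (vanishing on blocks, pinning down the zero set, then invoking Propositions \ref{prop:allradical} and \ref{prop:coordring}(ii)) are exactly the paper's scaffolding, but your generating set differs from the one in the paper. You take $\cG_0$ together with the monomials $x^S$ for each $k$-set $S \not\in \cB$, obtaining $\binom{v}{k} - |\cB|$ extra generators each of degree exactly $k$. The paper instead adds one polynomial $g_Y(\bx) = x^Y\bigl(x^{J,1}-1\bigr)$ for each $(k-1)$-set $Y$, where $J$ is the set of points $j$ with $Y \cup \{j\} \in \cB$; these $\binom{v}{k-1}$ generators also have degree at most $k$. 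Both sets are preserved by the automorphism group, and both arguments are essentially "brute force" in that they ignore all structure of $\cB$. Your choice is arguably more transparent as a proof of the $\gamma_2(\cB) \le k$ bound: each $x^S$ is visibly a degree-$k$ kill-switch for the unwanted $k$-set $S$. The paper's $g_Y$ generators are chosen with an eye toward the subsequent refinements (Theorem \ref{thm:Steiner}, Corollary \ref{cor:maximal}, Theorem \ref{thm:partial}), where the same "pick $Y$ (or $T$) and read off which extensions are legal" mechanism recurs with smaller sets $T$ of size $t < k$, reducing degree. So the paper's version is slightly less elementary here but sets up the template that the structured results then improve upon.
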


\begin{proof}
In addition to our generators $\cG_0$ for $\cT$ given in (\ref{eqn:Gtrivial}), 
we will include one generator $g_Y(\bx)$ for each
set $Y$ of $k-1$ points. For $Y \subseteq X$ with $|Y|=k-1$, define
\[ J = \{ j \in X : Y \cup \{j\} \in \cB \} .\]
Suppose $J = \{j_1, \dots , j_{\ell} \}$ (possibly the empty set).
Then consider the polynomial 
\begin{equation}
\label{eqn:g_Y}
g_Y(\bx) = x^Y (x^{J,1} - 1),
\end{equation}
noting that $g_Y(\bx)=-x^Y$ whenever $Y$ is contained in no block of the hypergraph.

Let $\sI$ be the ideal generated by 
$$ \cG = \cG_0 \cup \left\{ g_Y(\bx) : |Y| = k-1 \right\}. $$
It follows from Proposition \ref{prop:allradical} that $\sI$  
is a radical ideal. In order to prove 
that  $\sI = \cI(\cB)$ (and hence that $\gamma_2(\cB)\le k$),  we must verify
\begin{itemize}
\item $f(B)=0$ for every $f\in \cG$ and every $B\in \cB$;
\item for any $k$-set $C \not\in \cB$, there is some $Y$ with $g_Y(C)\neq 0$.
\end{itemize}
Suppose that $B \in \cB$; then it is easy to verify from (\ref{eqn:g_Y}) 
that $g_Y(B) = 0$ for any $(k-1)$-subset $Y$. Now suppose that $C \subseteq X$, $|C| = k$,
$C \not\in \cB$.
Let $Y \subseteq C$, $|Y| = k-1$. If $Y$ is contained in no blocks,
then $g_Y(C) = -1$  since $g_Y(\bx)=-x^Y$. On the other hand, if $Y$ is contained in 
at least one block, then $g_Y(C) = -1$ from (\ref{eqn:g_Y}). We then have that $\sI$ is a radical
ideal with $\cZ(\sI)= \cB$; by Proposition  \ref{prop:coordring}{\it (ii)} we are done.
\end{proof}

A \emph{Steiner system} is a $t$-($v,k,\lambda$) design with $\lambda=1$. The question of existence of non-trivial Steiner systems with $t>5$ has been recently resolved in spectacular fashion by Keevash \cite{keevash}.

\begin{thm}
\label{thm:Steiner}
Let $(X,\cB)$ be any $t$-$(v,k,1)$ design. For a block $B \in \cB$ and any $t$-element subset $T$ contained
in $B$, define (as in (\ref{eqn:gbJ}))
$$  g_{B,T}(\bx) = x^{B,t}  - \binom{k}{t} x^T . $$
Then
\begin{description}
\item[(i)]  $\cI(\cB)$ is generated by $\cG_0 \cup   \left\{ g_{B,T}(\bx) : B \in \cB, T\subseteq B, |T|=t \right\}$;
\item[(ii)] $\gamma_2(\cB) \le t$.
\end{description}
\end{thm}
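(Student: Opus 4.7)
The plan is to imitate the proof of Theorem \ref{thm:k-unif}, but replace the degree-$k$ witnesses $g_Y$ by the degree-$t$ polynomials $g_{B,T}$ from Lemma \ref{lem:special}, exploiting the Steiner property $\lambda=1$ in two directions. Once part (i) is established, part (ii) is immediate, since every element of $\cG_0$ has degree at most $2$ and every $g_{B,T}$ has degree $t$ (assuming $t\ge 2$, which is the interesting range). Set $\cG = \cG_0 \cup \{g_{B,T} : B\in\cB,\ T\subseteq B,\ |T|=t\}$ and $\sI = \langle \cG\rangle$. Proposition \ref{prop:allradical} guarantees that $\sI$ is radical, so by Proposition \ref{prop:coordring}(ii) it suffices to verify that $\cZ(\sI) = \{\bc_B : B\in\cB\}$.

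For the inclusion $\{\bc_B : B\in\cB\} \subseteq \cZ(\sI)$, the elements of $\cG_0$ vanish on each $\bc_B$ by Lemma \ref{lem:idealofk-sets}. For a generator $g_{B',T}$ and a block $B\in\cB$, Lemma \ref{lem:special} says $g_{B',T}(\bc_B) \neq 0$ iff $t \le |B\cap B'| < k$. If $B=B'$, then $|B\cap B'|=k$, so $g_{B',T}(\bc_B)=0$. If $B\neq B'$, then the defining property of a Steiner system forces $|B\cap B'| < t$: otherwise $B$ and $B'$ would share some $t$-subset, which would then lie in two distinct blocks, contradicting $\lambda=1$. In either case $g_{B',T}(\bc_B)=0$.

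For the reverse inclusion, any point of $\cZ(\sI)$ is of the form $\bc_C$ with $|C|=k$ since $\cG_0\subseteq\cG$. Suppose $C\not\in\cB$; we exhibit a generator that does not vanish on $\bc_C$. Choose any $t$-subset $T\subseteq C$ and let $B\in\cB$ be the unique block containing $T$, which exists by the Steiner property. Then $T\subseteq C\cap B$ gives $|C\cap B|\ge t$, while $C\neq B$ gives $|C\cap B|<k$. Lemma \ref{lem:special} then yields $g_{B,T}(\bc_C)\neq 0$, so $\bc_C\not\in\cZ(\sI)$. This completes (i), and (ii) follows from the degree count.

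The heart of the argument, and the only place where something beyond Lemma \ref{lem:special} is needed, is the two-sided use of $\lambda=1$: it is exactly what guarantees $|B\cap B'|<t$ for distinct blocks (so the witnesses vanish on everything in $\cB$) and simultaneously what guarantees that every $t$-subset of a non-block $C$ can be completed to a genuine block (producing a witness that does not vanish on $\bc_C$). No obstacle more serious than this appears; the only minor care point is the edge case $t=1$, where $\cG_0$ already has degree $2$ and so the bound should be read as $\gamma_2(\cB)\le\max\{2,t\}$.
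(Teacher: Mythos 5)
Your proof is correct and follows essentially the same route as the paper: vanishing on blocks via Lemma \ref{lem:special} together with the Steiner intersection bound $|B\cap B'|<t$ for distinct blocks, non-vanishing on a non-block $C$ by completing an arbitrary $t$-subset of $C$ to its unique block, and then the radical/Nullstellensatz machinery of Propositions \ref{prop:allradical} and \ref{prop:coordring}(ii). Your caveat about the $t=1$ edge case (where $\cG_0$ already forces degree $2$, so the bound is really $\gamma_2(\cB)\le\max\{2,t\}$) is a fair observation that the paper's statement quietly elides.
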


\begin{proof}
In view of Theorem \ref{thm:k-unif}, we can assume that
$t < k$. 
Consider the generating set 
$$ \cG = \cG_0 \cup \left\{ g_{B,T}(\bx) : B \in \cB, T\subseteq B, |T|=t \right\}. $$

From Lemma \ref{lem:special}, 
if $B \in \cB$ then $g_{B,T}(B) = 0$ for each $t$-subset $T$ of $B$, while 
if $B' \in \cB$, $B' \neq B$, then $|B' \cap B| \leq t-1$ and
it follows that $g_{B',T}(B) = 0$ for any $t$-subset $T$ of $B'$.

Now suppose that $|C| = k$, $C \not\in \cB$, and 
choose $T \subseteq C$, $|T| = t$.
There is a block $B \in \cB$ with $T \subseteq B$.
Then $g_{B,T}(C) \neq 0$ from Lemma \ref{lem:special} 
because $t \leq |B \cap C| \leq k-1$.

So $\cZ(\cG)=\{ \bc_B \mid B\in \cB\}$. By Proposition \ref{prop:allradical}, $\langle \cG\rangle$
is a radical ideal. So Equation (\ref{eqn:0satz}) gives $\cI(\cB)= \langle \cG\rangle$ and the result follows.
\end{proof}

\begin{remark}
Let $B$ be a block of the $t$-$(v,k,1)$ design $(X,\cB)$ and let $T$ and $T'$ be two $t$-element subsets of
$B$. Then it is easy to see that $\cI(\cB)$ contains $x^T - x^{T'}$. Since each of the generators $g_{B,T}$ in
the above theorem is expressible as a sum of polynomials of this form, we have a perhaps simpler set of 
generators $\cG_0 \cup \left\{ x^T - x^{T'} \mid T,T' \subseteq B \in \cB, \ |T|=|T'|=t \right\}$ for the ideal.
\end{remark}

\noindent {\bf Question:} Do the cosets $\left\{ x^{B,t} + \cI(\cB)  \mid B \in \cB \right\}$ form 
a basis for the coordinate ring $\cx[\bx]/\cI(\cB)$ in this case?

\bigskip

Next, we describe a couple of variations of Theorem
\ref{thm:Steiner}. A \emph{partial} $t$-$(v,k,1)$-design
is a $k$-uniform hypergraph in which any $t$-subset occurs
in {\it at most} one block. A partial  $t$-$(v,k,1)$-design,
$(X,\cB)$, 
is {\it maximal} if there does not exist
a $k$-subset $C \subseteq X$, $C \not\in \cB$ such that
$(X,\cB \cup \{C\})$ is a partial $t$-$(v,k,1)$-design.

\begin{cor}
\label{cor:maximal}
For any maximal partial $t$-$(v,k,1)$-design $(X,\cB)$,
$\gamma_2(\cB) \leq t$.
\end{cor}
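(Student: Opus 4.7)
The plan is to mimic the proof of Theorem \ref{thm:Steiner} using the very same generating set
$$\cG = \cG_0 \cup \left\{ g_{B,T}(\bx) : B \in \cB,\ T\subseteq B,\ |T|=t \right\},$$
with the polynomials $g_{B,T}$ defined as in (\ref{eqn:gbJ}). As before, we may assume $t<k$, otherwise Theorem \ref{thm:k-unif} already suffices. By Proposition \ref{prop:allradical}, $\langle \cG\rangle$ is radical, so by the Nullstellensatz it remains only to verify that $\cZ(\cG) = \{\bc_B : B \in \cB\}$.

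First I would check that each $g_{B,T}$ vanishes on every block. The block-on-itself case $g_{B,T}(\bc_B)=0$ is immediate from Lemma \ref{lem:special}. For two distinct blocks $B,B' \in \cB$, the partial design condition forces $|B \cap B'| \leq t-1$ (since any $t$-subset is in at most one block); hence by Lemma \ref{lem:special} again, $g_{B',T}(\bc_B)=0$ for every $t$-subset $T \subseteq B'$. This part of the argument goes through identically to the Steiner case, using only that $(X,\cB)$ is a \emph{partial} $t$-$(v,k,1)$-design.

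The key step, where maximality enters, is showing that for every $k$-set $C \not\in \cB$ some generator $g_{B,T}$ fails to vanish at $\bc_C$. Since $(X,\cB \cup \{C\})$ is \emph{not} a partial $t$-$(v,k,1)$-design, there must be a $t$-subset $T\subseteq C$ that is also contained in some existing block $B \in \cB$. Since $C \neq B$ and both have size $k$, we have $|B\cap C| < k$; combined with $T \subseteq B\cap C$, this gives $t \leq |B\cap C| \leq k-1$. Lemma \ref{lem:special} then guarantees $g_{B,T}(\bc_C)\neq 0$, so $\bc_C \notin \cZ(\cG)$.

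The main (and really the only) obstacle is recognizing that maximality is exactly the hypothesis needed to ensure this last step: in a \emph{non-maximal} partial design, a ``missing'' block $C$ could have all of its $t$-subsets disjoint from the $t$-subsets covered by $\cB$, and then every $g_{B,T}$ would vanish at $\bc_C$ by the first clause of Lemma \ref{lem:special}. Once maximality is brought to bear to force some $T \subseteq C$ to lie in an existing block, the proof closes in one line via Proposition \ref{prop:coordring}(ii).
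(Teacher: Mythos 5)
Your proof is correct and follows the same route as the paper: same generating set $\cG_0 \cup \{g_{B,T}\}$, same appeal to Lemma~\ref{lem:special} and Proposition~\ref{prop:allradical}, and the same use of maximality to produce, for any $k$-set $C\notin\cB$, a $t$-subset $T\subseteq C$ lying in some block $B$ with $t\le|B\cap C|\le k-1$. Your closing remark about why maximality is indispensable is a nice piece of motivation that the paper leaves implicit (and is what the paper addresses by adding the extra generators $x^T$ in Theorem~\ref{thm:partial}).
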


\begin{proof}
As we employ the same generating set as in the proof of
Theorem \ref{thm:Steiner}, we need only check that $\cZ(\langle \cG \rangle) = \cB$. As before, 
we have that $g_{B,T}(B') = 0$ for all blocks $B, B' \in \cB$ and all $t$-subsets $T$ of $B$.
Now suppose that $|C| = k$, $C \not\in \cB$. Because  $(X,\cB)$ is a
maximal partial $t$-$(v,k,1)$-design, it is possible to choose $T \subseteq C$, 
$|T| = t$ such that there is a block $B \in \cB$ with $T \subseteq B$.
Then $g_{B,T}(C) \neq 0$ as before.
\end{proof}

By a slight extension of our construction, we do not
require the partial $t$-$(v,k,1)$-design to be maximal.

\begin{thm}
\label{thm:partial}
For any partial $t$-$(v,k,1)$-design $(X,\cB)$,
$\gamma_2(\cB) \leq t$.
\end{thm}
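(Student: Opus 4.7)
The plan is to augment the generating set used in Theorem~\ref{thm:Steiner} (and its extension in Corollary~\ref{cor:maximal}) by adding one extra monomial for each $t$-subset of $X$ that happens not to lie in any block. As before, we may assume $t<k$ by Theorem~\ref{thm:k-unif}, and we set
\[ \cG = \cG_0 \,\cup\, \bigl\{ g_{B,T}(\bx) : B \in \cB,\ T \subseteq B,\ |T|=t \bigr\} \,\cup\, \bigl\{ x^T : |T|=t \text{ and } T \not\subseteq B \text{ for all } B \in \cB \bigr\}. \]
Every polynomial in $\cG$ has degree at most $t$, so it suffices to show $\langle \cG \rangle = \cI(\cB)$; the radicality then comes for free from Proposition~\ref{prop:allradical}, and Proposition~\ref{prop:coordring}(ii) finishes the job.

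First I would verify that each element of $\cG$ vanishes on every $\bc_B$ with $B\in\cB$. For $\cG_0$ this is immediate, and for the $g_{B',T}$ it is exactly the argument in Theorem~\ref{thm:Steiner}: any two distinct blocks of a partial $t$-$(v,k,1)$-design meet in at most $t-1$ points, so Lemma~\ref{lem:special} gives $g_{B',T}(\bc_B)=0$ for $B\neq B'$, while $g_{B,T}(\bc_B)=0$ as well. The new monomial generators $x^T$ vanish on every block because $T\not\subseteq B$ for any $B\in\cB$, by their very definition.

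Next I would check $\cZ(\langle \cG\rangle) = \{\bc_B : B\in\cB\}$. Given any $k$-subset $C\subseteq X$ with $C\notin\cB$, there are two cases. If some $t$-subset $T\subseteq C$ is contained in a block $B\in\cB$, then $t\le |B\cap C|\le k-1$ (the upper bound since $C\neq B$), so Lemma~\ref{lem:special} yields $g_{B,T}(\bc_C)\neq 0$. Otherwise, no $t$-subset of $C$ is contained in any block, so every $t$-subset $T\subseteq C$ qualifies as one of the new monomial generators, and for any such $T$ we have $x^T(\bc_C) = 1 \neq 0$. This is precisely the obstacle that prevents the proof of Corollary~\ref{cor:maximal} from going through in the non-maximal setting — and it is resolved cleanly by the observation that the very $t$-subsets witnessing non-maximality supply monomials that already lie in $\cI(\cB)$.

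The main subtlety, as I see it, is just this interplay between the two types of $k$-subsets $C\notin\cB$: those ``blocked'' from being in $\cB$ by having a $t$-subset of theirs already covered, and those that are merely ``missing.'' The polynomials $g_{B,T}$ handle the first kind (using that two blocks of a partial design meet in $<t$ points), and the new $x^T$ handle the second. Everything else is bookkeeping via the propositions in Section~\ref{sec:radical}.
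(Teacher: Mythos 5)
Your proof is correct and follows essentially the same route as the paper: same augmented generating set (the paper calls your collection of uncovered $t$-subsets $\mathbf{T}$), same reliance on Lemma~\ref{lem:special} for the $g_{B,T}$, same case split on whether some $t$-subset of $C$ is covered by a block, and the same appeal to Propositions~\ref{prop:allradical} and \ref{prop:coordring}(ii). The only cosmetic difference is that the paper first dispatches the maximal case via Corollary~\ref{cor:maximal} and only introduces the extra monomials when $(X,\cB)$ is non-maximal, whereas you use the augmented set uniformly; both are fine.
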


\begin{proof} If $(X,\cB)$ is maximal, 
then Corollary
\ref{cor:maximal} yields the desired result,
so assume $(X,\cB)$ is not maximal. Let 
$$ \mathbf{T} = \left\{ T \subseteq X \ : \  |T|=t, (\forall B\in \cB)( T \not\subseteq B) \right\}. $$
Now consider the generating set 
$$ \cG = \cG_0 \cup \left\{ g_{B,T}(\bx) : B \in \cB, T\subseteq B, |T|=t \right\} \cup \{ x^T : T \in \mathbf{T} \}. $$
Since $x^T$ evaluates to zero on every block, we still have $\cB \subseteq \cZ( \langle \cG\rangle )$.  But if 
$C$ is a $k$-set not belonging to $\cB$, either $\cB \cup \{C\}$ is again a partial $t$-design or some $t$-subset $T$
of $C$ is contained in some block $B$. In the latter case, $g_{B,T}(C) \neq 0$ as above; in the former case, every $t$-subset of $C$ belongs to $\mathbf{T}$ so we can
take any $t$-subset $T \subseteq C$ and, with $f(\bx)=x^T \in \cG$, we have $f(C) \neq 0$. So $\cZ( \langle \cG \rangle) = \cB$ and the rest of the proof follows just as before. 
\end{proof}

This result gives us another upper bound on $\gamma_2$ for general $t$-designs.

\begin{cor} 
\label{cor:maxintersection}
Let $(X,\cB)$ be a $t$-$(v,k,\lambda)$ design with $|B \cap B'| < s$ for every pair $B,B'$ of distinct
blocks. Then $\gamma_2(\cB) \le s$. $\Box$
\end{cor}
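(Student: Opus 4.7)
The plan is to reduce this directly to Theorem~\ref{thm:partial} by reinterpreting the intersection hypothesis combinatorially. Specifically, I would first observe that the condition $|B \cap B'| < s$ for all distinct $B, B' \in \cB$ is exactly equivalent to saying that no $s$-element subset of $X$ is contained in two distinct blocks: if some $s$-set $S$ lay in both $B$ and $B'$, then $|B \cap B'| \ge s$, contradicting the hypothesis.

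Consequently, $(X,\cB)$ is a partial $s$-$(v,k,1)$-design in the sense defined just before Theorem~\ref{thm:partial}. Note that here we are forgetting the $t$-design structure entirely and only using the pairwise intersection bound; the values of $t$ and $\lambda$ play no role beyond ensuring $\cB$ is a nonempty $k$-uniform hypergraph. In particular, we do not need $(X,\cB)$ to be maximal as a partial $s$-$(v,k,1)$-design.

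Applying Theorem~\ref{thm:partial} with parameter $s$ in place of $t$ then yields $\gamma_2(\cB) \le s$, completing the proof. The main conceptual step is just recognizing that the intersection condition $|B \cap B'| < s$ is the definition of a partial $s$-design; after that observation, the result is an immediate corollary of the preceding theorem. There is no real obstacle here, since the heavy lifting --- constructing the explicit generating set of polynomials $\cG_0 \cup \{g_{B,T}\} \cup \{x^T : T \in \mathbf{T}\}$ of degree at most $s$ and verifying the zero set is exactly $\cB$ --- has already been carried out in the proof of Theorem~\ref{thm:partial}.
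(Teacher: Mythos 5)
Your proof is correct and matches the paper's intended argument exactly: the paper leaves the corollary with just a $\Box$ precisely because it is the immediate observation you spell out, namely that $|B \cap B'| < s$ for all distinct blocks means $(X,\cB)$ is a partial $s$-$(v,k,1)$-design, so Theorem~\ref{thm:partial} applies with $s$ in place of $t$. Your remark that the $t$-design structure plays no role beyond non-emptiness is also accurate.
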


\section{Symmetric balanced incomplete block designs}
\label{sec:symmetric}

A $2$-($v,k,\lambda$) design is traditionally called a \emph{balanced incomplete block design} (BIBD)  \cite[Chapter 1]{dougbook}. Fisher's inequality states that, for any such $2$-design, we have $|\cB| \ge |X|$ since, for $t\ge 2$,
the point-block incidence matrix $A$ has rank $v$. A $2$-design with
equally many blocks and points is called a \emph{symmetric 2-design}.  While
Theorem \ref{thm:tover2} implies here that $\gamma_1(\cB) > 1$, we may use the invertibility 
of $A$ to obtain more information in this case.  If 
$f(\bx)= w_0 + \sum_{i=1}^v w_i x_i \in \cI(\cB)$ then $\bw =(w_1,\ldots,w_v)$ satisfies $\bw^\top A = -w_0 \mathbf{1}$
and $\bw + \frac{w_0}{k} \mathbf{1}$ lies in the left nullspace of $A$. So $\bw$ is a scalar multiple of $\mathbf{1}$
and $f$ is trivial. The quadratic case is more interesting.  Let $\br_i$ denote row $i$ of matrix $A$. For any 
two distinct  points $i,j\in X$, the entrywise product $\br_i \circ \br_j$ is expressible as a linear combination of 
the rows  of $A$. Say
$$ \br_i \circ \br_j = \sum_{h=1}^v w_h \br_h ~ .$$
Then the polynomial $f(\bx)$ given by
$$ f(\bx) = x_i x_j - \sum_{h=1}^v w_h x_h $$
is easily seen to belong to $\cI(\cB)$: for a block $B$ indexing column $\ell$ of matrix $A$, we have
$f( \bc_B) =  A_{i \ell } A_{j\ell} - \sum_{h=1}^v w_h A_{h\ell } = 0$. In fact, we may determine
the coefficients $w_h$ explicitly to obtain  a nice generating set for our ideal.

\begin{thm}
\label{thm:symbibd}
Let $(X,\cB)$ be any non-trivial symmetric $2$-$(v,k,\lambda)$ design. For each pair $i,j$ of distinct points 
from $X$ define 
$$f_{i,j}(\bx) = (k- \lambda )x_ix_j - \sum_{i,j \in B\in \cB} x^{B,1} + \lambda^2 . $$
Then
\begin{description}
\item[(i)]  $\cI(\cB)$ is generated by $\cG_0 \cup \{ f_{i,j} \mid  i,j \in X \} $;
\item[(ii)] $\gamma_1(\cB) = \gamma_2(\cB)=2$;
\item[(iii)] the coordinate ring $\cx[\bx]/\cI(\cB)$ admits a basis consisting of cosets $\{ x_i + \cI(\cB) \mid 1\le i\le v\}$.
\end{description}
\end{thm}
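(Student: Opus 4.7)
The plan is to verify directly that each $f_{i,j}$ lies in $\cI(\cB)$, and then use a dimension count in the coordinate ring to conclude the resulting ideal is all of $\cI(\cB)$. First I would check that $f_{i,j}(\bc_B) = 0$ for every $B \in \cB$ via three cases according to how many of $i,j$ lie in $B$. The key inputs are the standard facts about symmetric 2-designs: any pair of points lies in $\lambda$ blocks, and any two distinct blocks meet in exactly $\lambda$ points. When $i,j \in B$, the inner sum $\sum_{i,j \in B' \in \cB} |B' \cap B|$ splits as $k$ (from $B' = B$) plus $(\lambda-1)\lambda$ from the other blocks meeting $B$ in $\lambda$ points; in the two remaining cases the sum equals $\lambda^2$ and the quadratic term in $f_{i,j}$ vanishes. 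Direct arithmetic then yields zero in each case.

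Now set $\sI = \langle \cG_0 \cup \{f_{i,j} : i \ne j\}\rangle$, which is radical by Proposition \ref{prop:allradical} and satisfies $\sI \subseteq \cI(\cB)$ with $\cB \subseteq \cZ(\sI)$. The main step is to show that $x_1 + \sI, \ldots, x_v + \sI$ span the quotient ring $\cR/\sI$. Modulo $\cT \subseteq \sI$ every polynomial is multilinear. Since $k \ne \lambda$ for any non-trivial design, the relation $f_{i,j} \equiv 0 \pmod \sI$ rewrites $x_ix_j$ as an affine combination of $x_1, \ldots, x_v$. Applied factorwise to a multilinear monomial $x_{i_1} \cdots x_{i_d}$ with $d \ge 2$, written as $(x_{i_1}x_{i_2}) \cdot x_{i_3} \cdots x_{i_d}$, the substitution reduces the degree modulo $\sI$ by at least one (after using $x_h^2 \equiv x_h$ to keep things multilinear); iterating and using $x^{X,1} \equiv k$ to eliminate constants expresses every polynomial as a $\cx$-linear combination of the $x_i$'s. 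Hence $\dim_\cx \cR/\sI \le v$. Proposition \ref{prop:coordring}(i) together with $\cB \subseteq \cZ(\sI)$ gives $\dim_\cx \cR/\sI \ge |\cB| = v$, so Proposition \ref{prop:coordring}(iii) forces $\sI = \cI(\cB)$, proving (i).

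For (iii), the cosets $x_i + \cI(\cB)$ are also linearly independent: if $\sum c_i x_i \in \cI(\cB)$ then the vector $\bc = (c_1,\ldots,c_v)^\top$ satisfies $A^\top \bc = \mathbf{0}$, but the incidence matrix of a symmetric 2-design is nonsingular (since $AA^\top = (k-\lambda)I + \lambda J$ is positive definite), so $\bc = \mathbf{0}$. Combined with the spanning argument above, these $v$ cosets form a basis for the $v$-dimensional coordinate ring. For (ii), Theorem \ref{thm:tover2} gives $\gamma_1(\cB) \ge 2$ while (i) gives $\gamma_2(\cB) \le 2$; and if every $f_{i,j}$ lay in $\cT$, then (i) would force $\cI(\cB) = \cT$ and hence $\cB = \binom{X}{k}$, contradicting the non-trivial hypothesis. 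So some $f_{i,j}$ is non-trivial and $\gamma_1(\cB) = \gamma_2(\cB) = 2$.

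I expect the main obstacle to be the degree-reduction step: one must justify carefully that the $d\mapsto d-1$ reduction always works (in particular, that the quadratic coefficient $k-\lambda$ is invertible and that the resulting constants can always be absorbed back into linear combinations via $x^{X,1} \equiv k$). The three-case verification in the first step is mechanical but requires keeping careful track of which block is playing the role of the evaluation point in the inner sum $\sum_{i,j \in B'} |B' \cap B|$.
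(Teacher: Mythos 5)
Your proof is correct and follows the same overall strategy as the paper's: verify that each $f_{i,j}$ vanishes on every block, bound $\dim \cR/\sI \le v = |\cB|$ by a degree-reduction argument, and invoke Proposition~\ref{prop:coordring}(iii) to conclude $\sI = \cI(\cB)$ with the stated basis. The only real difference is cosmetic — the paper derives the coefficients of $f_{i,j}$ by inverting the incidence matrix via $A^{-1} = \frac{1}{k-\lambda}\left(A^\top - \frac{\lambda}{k}J\right)$, whereas you verify membership directly from the three-case block-intersection count; you also spell out the degree-reduction and linear-independence steps more explicitly than the paper, which simply asserts that every degree-two monomial reduces modulo $\langle\cG\rangle$.
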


\begin{proof} Assume $(X,\cB)$ is a $2$-design with $|\cB|=v$ and incidence matrix $A$.
As $A A^\top  = (k-\lambda)I+ \lambda J$, we see that the inverse of our incidence matrix is
$$ A^{-1} = \frac{1}{k-\lambda} \left( A^\top - \frac{\lambda}{k} J \right). $$
Letting $\br_i$ denote the $i^{\rm th}$ row of $A$ ($i\in X$), observe that $\br_i \circ \br_j$ is a 
01-vector of length $v$ with $\lambda$ entries equal to one. So $\br_i \circ \br_j = \bw^\top A$  gives
$$ \bw^\top =  \left( \br_i \circ \br_j \right) A^{-1} = \frac{1}{k-\lambda} \sum_{i,j\in B} \bc_B^\top - \frac{ \lambda^2 }{k(k-\lambda)} \mathbf{1}^\top $$
with entries 
$$ w_h = \frac{ - \lambda^2 }{k(k-\lambda)} + \frac{1}{k-\lambda} \left| \{ B \in \cB \mid h,i,j \in B \} \right|; $$
that is, $\cI(\cB)$ contains the quadratic polynomial 
$$x_i x_j +  \frac{1}{k-\lambda}  \sum_{i,j \in B} x^{B,1} 
- \frac{ \lambda^2 }{k(k-\lambda)} x^{X,1}.$$
As $x^{X,1}$ takes value $k$ on each $\bc_B$, this shows that
ideal $\sI$ contains $f_{i,j}(\bx)$ for every pair $i,j$ of distinct points.

Set $\cG = \cG_0 \cup \{ f_{i,j} \mid  i,j \in X \} $ and consider $\sI = \langle \cG\rangle$. 
The dimension of the quotient ring $\cx[\bx]/\sI$
is $v$ since every monomial of degree two is congruent, modulo $\langle \cG \rangle$, to some polynomial
of degree one\footnote{We choose some monomial ordering for the ring $\cx[\bx]$ which refines the partial order by total degree.}. Since we have $\dim \cx[\bx]/\sI = |\cB|$, we use Proposition \ref{prop:coordring}{\it (iii)} to see that 
$\cZ(\sI)=\cB$ and each zero has multiplicity one.
It then follows that $\sI  = \cI(\cB)$ and the cosets of the
form $x_ix_j + \cI(\cB)$ ($i\neq j$) form a basis as claimed.
\end{proof}

\begin{example}
Consider the case where $(X,\cB)$ is a symmetric $2$-($v,k,2$) design.
Let $X = \{i: 1 \leq i \leq v\}$ be the set of points and let 
$\cB = \{B_r: 1 \leq r \leq v\}$  be the set of blocks in the design.
Let $i,j$ be any two distinct points. There are two blocks that contain $i$ and $j$, 
say $B_r$ and $B_s$. Note that $B_r \cap B_s = \{i,j\}$.
Denote the symmetric difference by
\begin{equation*}
\label{eqn:Zij}
 Z_{i,j} = B_r \cup B_s \setminus \{i,j\} 
\end{equation*}
and define 
$$f_{i,j}(\bx) = (k-2)x_ix_j +4 -2(x_i + x_j) - \sum_{ h \in Z_{i,j} } x_h  .$$
Then $\cI(\cB)$ is generated by $x^{X,1}-k$, $x_i(x_i-1)$ ($1\le i\le v$) and the polynomials $f_{i,j}(\bx)$.
\end{example}

\begin{thm} 
\label{thm:projective}
Suppose that $(X,\cB)$ consists of the points and
$e$-dimensional subspaces of $\mathrm{PG}(d,q)$, where
$1 \leq e < d$. Let $\cL$ denote the set of all lines (1-dimensional subspaces) of $\mathrm{PG}(d,q)$. 
For every line $L$ in $\cL$ and every $2$-element subset $J
\subseteq L$  define $g_{L,J}(\bx) = x^{L,2}  - \binom{q+1}{2} x^J$.  Then
\begin{description}
\item[(i)]  $\cI(\cB)$ is generated by $\cG :=\cG_0 \cup \{ g_{L,J} \mid  L \in \cL, J \subseteq L, |J|=2  \} $;
\item[(ii)] $\gamma_1(\cB) = \gamma_2(\cB)=2$;
\item[(iii)] the coordinate ring $\cx[\bx]/\cI(\cB)$ admits a basis consisting of cosets $\{ x_i + \cI(\cB) \mid 1\le i\le v\}$.
\end{description}
\end{thm}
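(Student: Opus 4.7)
The plan rests on the fundamental projective-geometry fact that any $e$-dimensional subspace of $\mathrm{PG}(d,q)$ which contains two distinct points also contains the unique line joining them. My first step is to verify that each $g_{L,J}$ vanishes on every block $\bc_B$. Writing $J=\{i,j\}\subseteq L$ and splitting into cases: if $J\subseteq B$, linear closure gives $L\subseteq B$, so $x^{L,2}(\bc_B)=\binom{q+1}{2}=\binom{q+1}{2}\,x^J(\bc_B)$; if $J\not\subseteq B$, then $|L\cap B|\le 1$ (else $L\subseteq B$), making both terms zero. Thus $\{\bc_B\mid B\in\cB\}\subseteq \cZ(\langle\cG\rangle)$.

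For the reverse containment, let $\bc=\bchi_C\in\cZ(\langle\cG\rangle)$; from $\cG_0$, $C$ is a subset of $X$ of size $k=(q^{e+1}-1)/(q-1)$. For any line $L$ and pair $J\subseteq L$,
\[ g_{L,J}(\bchi_C)=\binom{|L\cap C|}{2}-\binom{q+1}{2}[J\subseteq C].\]
Whenever $|L\cap C|\ge 2$, choosing $J\subseteq L\cap C$ forces $\binom{|L\cap C|}{2}=\binom{q+1}{2}$ and hence $L\subseteq C$. So $C$ is line-closed, and by the classical characterization of subspaces of $\mathrm{PG}(d,q)$ as line-closed point sets (proved by inductively enlarging the span: closure under lines implies closure under taking planes, 3-flats, and so on up to the span of $C$), $C$ is a subspace. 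Its cardinality $k$ then forces $\dim C=e$, so $C\in\cB$.

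Now (i) follows immediately: $\cZ(\langle\cG\rangle)=\{\bc_B\mid B\in\cB\}$, and $\langle\cG\rangle$ is a radical ideal by Proposition \ref{prop:allradical} (since $\cG_0\subseteq\cG$), so Proposition \ref{prop:coordring}\textit{(ii)} yields $\cI(\cB)=\langle\cG\rangle$. For (ii), the upper bound $\gamma_2(\cB)\le 2$ is immediate from the degrees appearing in $\cG$, while $\gamma_1(\cB)\ge 2$ follows from Theorem \ref{thm:tover2} since the projective design is at least a $2$-design.

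For (iii), the plan is to emulate the dimension argument used in Theorem \ref{thm:symbibd}\textit{(iii)}: show that modulo $\cI(\cB)$ every monomial of degree $\ge 2$ reduces to a linear combination of the cosets $x_h+\cI(\cB)$, combine this with the linear independence of those cosets (which follows from the full row rank of the point-block incidence matrix for a nontrivial 2-design), and finally invoke $\dim \cx[\bx]/\cI(\cB)=|\cB|$. The relations $g_{L,J}\in\cI(\cB)$ immediately let one replace $x_ix_j$ by $\binom{q+1}{2}^{-1}x^{L,2}$ (where $L$ is the line through $i,j$), identifying all products $x_ax_b$ supported on a common line, but they do not on their own push us down to linear polynomials. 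I expect forcing this quadratic-to-linear reduction — presumably by combining the $g_{L,J}$ across pencils of lines through a common point, or by leveraging the additional identities enforced by the parameters of the design — to be the main technical obstacle, and indeed the core of the argument.
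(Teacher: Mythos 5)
Your argument for parts (i) and (ii) follows the paper's proof essentially step for step: verify that each $g_{L,J}$ vanishes on every block (the paper routes this through its Lemma \ref{lem:special} while you compute directly from projective closure, but the content is the same), use the characterization of subspaces of $\mathrm{PG}(d,q)$ as precisely the line-closed point sets to get the reverse containment $\cZ(\langle\cG\rangle)\subseteq\{\bc_B\mid B\in\cB\}$, and close with radicality via Propositions \ref{prop:allradical} and \ref{prop:coordring}\textit{(ii)}. Part (ii) then follows exactly as you say.

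For part (iii), however, you have flagged something more serious than a technical obstacle. Since $\cZ(\cI(\cB))$ consists of $|\cB|$ points, the coordinate ring satisfies $\dim_{\cx}\cx[\bx]/\cI(\cB)=|\cB|$; a basis of the advertised form has cardinality $v$, so (iii) can hold only when $v=|\cB|$. For the design of points and $e$-flats of $\mathrm{PG}(d,q)$ this happens precisely when $e=d-1$, the hyperplane (symmetric) case. For $1\le e\le d-2$ --- e.g.\ lines in $\mathrm{PG}(3,q)$, where $|\cB|=(q^2+1)(q^2+q+1)>q^3+q^2+q+1=v$ --- the $v$ linear cosets cannot span and (iii) fails as written. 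The paper's own proof in fact never argues (iii): it stops once it has established $\cI(\cB)=\langle\cG\rangle$. In the one case where (iii) does hold ($e=d-1$), the design is a symmetric $2$-design and (iii) is an immediate consequence of Theorem \ref{thm:symbibd}\textit{(iii)}, which already supplies the quadratic-to-linear reduction you were anticipating. So your instinct that (iii) is where the difficulty lies was correct; the resolution is that (iii) needs the extra hypothesis $e=d-1$, under which it reduces to the symmetric-design theorem rather than requiring a new pencil-of-lines argument.
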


\begin{proof}
Here we have $v = (q^{d+1}-1) / (q-1)$ and 
$k = (q^{e+1}-1) / (q-1)$. Every line of $\mathrm{PG}(d,q)$
contains $q+1$ points.
Suppose $B$ is an $e$-dimensional subspace of 
$\mathrm{PG}(d,q)$ and $L$ is a line. Then
$|L \cap B| \in \{ 0,1,q+1\}$. In each case,
$g_{L,J}(B) = 0$ for any $2$-element $J \subseteq L$  by Lemma \ref{lem:special}.

A subset of points of $\mathrm{PG}(d,q)$
that intersects any line in $0,1$ or $q+1$ points
is necessarily a subspace of $\mathrm{PG}(d,q)$.
Suppose that $|C| = k$ but $C$ is not an
$e$-dimensional subspace of 
$\mathrm{PG}(d,q)$. Then there exists a line 
$L$ such that
$|L \cap C| \not\in 0,1,q+1$. In this case,
$g_{L,J}(C) \neq 0$  by Lemma \ref{lem:special}.

We have shown that the ideal generated by  $\cG$ 
is a radical ideal with zero set $\cB$, so we are done by Proposition \ref{prop:coordring}{\it (ii)}.
\end{proof}

\begin{remark}
Clearly we can build a much smaller generating set than our choice of $\cG$ by selecting just one pair $J$ of 
points in each line. We instead prefer here to choose a set $\cG$ of polynomials which is invariant under the automorphism group of the design.
\end{remark}

\section{Triple systems}
\label{sec:triples}

Identifying each square-free monomial $x^C$ with the set $C$, the multilinear polynomials with real coefficients
are in bijective correspondence with the real-valued functions on the Boolean lattice. For multilinear $f$ write
$$ c(f) = \left( f_D : D \subseteq \{1,\ldots,v\} \right) \qquad \mbox{where} \qquad f(\bx)= \sum_D f_D x^D .$$

Suppose that $C \subseteq X$ and $0 \leq s \leq |C|$. 
The {\it $s$-incidence vector} of $C$, denoted
$\delta = \delta^s(C)$, is the vector of length 
$w = \sum_{i=0}^{s} \binom{v}{i}$, whose coordinates
correspond to the subsets of $X$ of cardinality at most $s$, defined by
\[ \delta_J =
\begin{cases} 1 & \text{if $J \subseteq C$} \\
0 & \text{otherwise,}
\end{cases}
\]
where $|J| \leq s$.

Now suppose that $f$ is a multilinear polynomial in $x_1, \dots , x_v$
of degree at most $s$.
The vector of coefficients of $f$, denoted $c = c(f)$,
is also a $w$-dimensional vector whose coordinates
correspond to the subsets of $X$ of cardinality at most $s$.

The following lemma is obvious.
\begin{lem} 
\label{lem:innerprod}
If $f$ is a polynomial of degree
at most $s$ and $C \subseteq X$, then $f(C) = \delta \cdot c$, where
$\delta = \delta^s(C)$ and $c = c(f)$.
\end{lem}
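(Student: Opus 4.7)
The plan is to unpack the definitions and observe that the asserted identity is essentially a restatement of how one evaluates a multilinear polynomial at a $01$-vector. Since $f$ is multilinear of degree at most $s$, we may write
$$ f(\bx) = \sum_{J \subseteq X,\ |J| \le s} f_J \, x^J, $$
so that the coefficient vector $c = c(f)$ has entries $c_J = f_J$ for $|J|\le s$. This is the defining expression we will plug into.

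Next, I would evaluate at the point $\chi_C$. By definition of $\chi_C$, the variable $x_i$ is replaced by $1$ when $i\in C$ and by $0$ otherwise. Hence the monomial $x^J = \prod_{j\in J} x_j$ evaluates to $1$ if every $j\in J$ satisfies $j\in C$, i.e.\ if $J\subseteq C$, and evaluates to $0$ otherwise. In other words, $x^J(\chi_C) = \delta_J$, where $\delta = \delta^s(C)$ is the $s$-incidence vector of $C$.

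Combining these two observations yields
$$ f(C) = \sum_{J\subseteq X,\ |J|\le s} f_J \, x^J(\chi_C) = \sum_{J\subseteq X,\ |J|\le s} c_J\, \delta_J = \delta \cdot c, $$
which is the desired identity. There is no real obstacle here: the lemma is a tautological consequence of the definitions of $c(f)$ and $\delta^s(C)$, together with the fact that a multilinear monomial evaluated on a $01$-vector is exactly the indicator of containment. The only thing worth flagging is that the statement implicitly assumes $f$ is multilinear (so that the coefficient vector $c(f)$ is well-defined as a $w$-dimensional object indexed by subsets of size at most $s$); this was the standing assumption made just before the lemma.
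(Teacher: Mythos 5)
Your proof is correct and is exactly the natural elaboration of what the paper declares ``obvious'' without writing out a proof: expand $f$ in its multilinear monomial basis, observe that $x^J(\chi_C)$ is the indicator of $J\subseteq C$, and sum. Your remark that the lemma implicitly requires $f$ to be multilinear (so that $c(f)$ is well-defined as a $w$-dimensional coefficient vector) is a good catch and matches the standing convention set up just before the lemma in the paper.
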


\begin{example}
Suppose that $X = \{1,2,3,4,5\}$, $C = \{1,2,3\}$ and $s=2$.
Let \[f = 1 + x_1 + 2x_2 - 3x_3 + 4x_4 - x_1x_2 + +3x_1x_5 + 2x_2x_3 - 3x_3x_5.\]
Then        \[c(f)= (1,1,2,-3,4,0,-1,0,0,3,2,0,0,0,-3,0)\]
and \[\delta^s(C) = (1,1,1, 1,0,0, 1,1,0,0,1,0,0,0, 0,0).\] 
It is easy to verify that $f(C) = 1+1+2-3-1+2 = 2$.
\end{example}

\begin{thm}
\label{thm:lowerbound}
Suppose that $(X, \cB)$ is a $2$-$(v,3,2)$ design
such that \[ \{ \{1,2,3\}, \{1,4,5\}, \{2,4,6\}, \{3,5,6\},
\{1,2,4\}, \{1,3,5\}, \{2,3,6\} \} \subseteq \cB,\]
but 
\[ \{4,5,6\} \not\in \cB.\]
Then $\gamma_2(\cB) =3$.
\end{thm}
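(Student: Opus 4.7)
The plan is to prove $\gamma_2(\cB) \ge 3$; combined with Theorem \ref{thm:k-unif}, which gives $\gamma_2(\cB) \le k = 3$, this yields $\gamma_2(\cB) = 3$. The strategy is to exhibit a point outside the design, namely $\bc_{\{4,5,6\}}$, which is a common zero of every polynomial of degree at most $2$ in $\cI(\cB)$. Since such polynomials then generate only a proper subideal of $\cI(\cB)$, this forces $\gamma_2(\cB) > 2$.

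Write $B_1 = \{1,2,3\}$, $B_2 = \{1,4,5\}$, $B_3 = \{2,4,6\}$, $B_4 = \{3,5,6\}$, $B_5 = \{1,2,4\}$, $B_6 = \{1,3,5\}$, $B_7 = \{2,3,6\}$ for the seven hypothesized blocks. The key combinatorial observation is that the two collections of triples $\{B_1, B_2, B_3, B_4\}$ and $\{B_5, B_6, B_7, \{4,5,6\}\}$ cover each pair in $\binom{\{1,\ldots,6\}}{2}$ exactly once and each point of $\{1,\ldots,6\}$ exactly twice, and involve no other point of $X$. In the language of Section \ref{sec:triples}, this translates into the identity
\[ \delta^2(\{4,5,6\}) = \sum_{j=1}^{4} \delta^2(B_j) - \sum_{j=5}^{7} \delta^2(B_j), \]
which I would verify by checking it coordinate-by-coordinate at the empty set, the singletons, and the pairs.

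By Lemma \ref{lem:innerprod}, this identity gives, for every multilinear polynomial $f$ of degree at most $2$, the evaluation identity $f(\{4,5,6\}) = \sum_{j=1}^{4} f(B_j) - \sum_{j=5}^{7} f(B_j)$. When $f \in \cI(\cB)$ the right-hand side vanishes because each $B_j$ is a block. For a general polynomial $f \in \cI(\cB)$ of degree at most $2$, write $f = \tilde f + \sum_i a_i(x_i^2 - x_i)$ with $\tilde f$ multilinear of degree at most $2$ (legitimate because a degree-$\le 2$ polynomial has no higher-powered cross terms); since $x_i^2 - x_i \in \cT \subseteq \cI(\cB)$, we have $\tilde f \in \cI(\cB)$, and $f$ and $\tilde f$ agree on every $0$-$1$ vector, so $f(\{4,5,6\}) = \tilde f(\{4,5,6\}) = 0$.

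Now suppose for contradiction that $\cI(\cB) = \langle \cG \rangle$ for some $\cG \subseteq \cR$ with every element of degree at most $2$. By the previous paragraph, every $g \in \cG$ vanishes on $\bc_{\{4,5,6\}}$, hence so does every element of $\langle \cG \rangle = \cI(\cB)$. But $\cZ(\cI(\cB)) = \{\bc_B : B \in \cB\}$ and $\{4,5,6\} \notin \cB$, a contradiction. The main obstacle is the $\delta^2$ identity; I expect it becomes transparent once one views $(\{B_1,B_2,B_3,B_4\}, \{B_5,B_6,B_7,\{4,5,6\}\})$ as a $2$-\emph{trade} of volume $4$ on $\{1,\ldots,6\}$ (a Pasch-like configuration), so the two sides define the same multiset of covered pairs and singletons.
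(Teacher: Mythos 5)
Your proof is correct and follows essentially the same route as the paper: both exploit the linear dependence among the $\delta^2$-vectors of the seven given blocks and $\{4,5,6\}$ (your identity is simply the paper's equation (\ref{eq5}) rearranged), combined with Lemma~\ref{lem:innerprod}, to conclude that every degree-$\leq 2$ polynomial in $\cI(\cB)$ must vanish at $\bc_{\{4,5,6\}}$ and hence cannot generate the ideal. You spell out a detail the paper elides --- reducing a general degree-$\leq 2$ polynomial to a multilinear one modulo $\cT$ before applying Lemma~\ref{lem:innerprod} --- but the underlying idea is identical.
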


\begin{proof}
By Theorems \ref{thm:tover2} and \ref{thm:k-unif}, we have $2 \le \gamma_2(\cB) \le 3$. We now show 
$ \gamma_2(\cB)  > 2$.
Denote $B_1 = \{1,2,3\}$, $B_2 = \{1,4,5\}$, $B_3 = \{2,4,6\}$, 
$B_4 = \{3,5,6\}$, $B_5 = \{1,2,4\}$, $B_6 = \{1,3,5\}$, 
$B_7 = \{2,3,6\}$ and $C = \{4,5,6\}$.
For $s=2$, it is easy to verify that
\begin{equation}
\label{eq5} \delta^s(B_1) + \delta^s(B_2)  + \delta^s(B_3)  
+ \delta^s(B_4) 
=   \delta^s(B_5)  + \delta^s(B_6)  + \delta^s(B_7)  + \delta^s(C) .\end{equation}
For any  $f \in \cI(\cB)$, we have that
$f(B_1) = f(B_2) = \cdots = f(B_7) = 0$.
It then follows from (\ref{eq5}) and Lemma \ref{lem:innerprod} that
$f(C) = 0$. However, since $C \not\in \cB$, it must be the
case that $f(C) \neq 0$ for some $f \in  \cI(\cB)$. This contradiction
establishes the desired result.
\end{proof}

Note that this linearization technique may be applied more generally. If we can find $C \not\in \cB$ such that
$\delta^s(C)$ is a linear combination of the $w$-dimensional vectors $\{ \delta^s(B) \mid  B\in \cB\}$,
then $\gamma_2(\cB) > s$.

We now show one way to construct examples of $2$-$(v,3,2)$ designs that satisfy the hypotheses
of Theorem \ref{thm:lowerbound}. Our construction works for any $v \equiv 1,3 \pmod{6}$,
$v \geq 15$.

Suppose we have a $2$-$(v,3,2)$ design that satisfies the hypotheses
of Theorem \ref{thm:lowerbound}.
We first observe that
\begin{equation}
\label{set1} \{ \{1,2,3\}, \{1,4,5\}, \{2,4,6\}, \{3,5,6\} \}
\end{equation}
is a set of four blocks that forms a so-called \emph{quadrilateral}
(or \emph{Pasch configuration}). 
As well, 
\begin{equation}
\label{set2}
\{ \{1,2,4\}, \{1,3,5\}, \{2,3,6\} \} 
\end{equation}
is a set of three blocks that is not contained
in a quadrilateral (because $\{4,5,6\}$ is not a block).

We require two ingredients:
\begin{enumerate}
\item
The unique $2$-$(7,3,1)$ design is isomorphic to point-line structure of PG$(2,2)$ and it contains a quadrilateral
(in fact, it contains exactly seven distinct quadrilaterals). 
Therefore the  points of this design 
can be relabelled so it contains the four blocks in (\ref{set1}).
Now, from the Doyen-Wilson Theorem, we can embed this 
$2$-$(7,3,1)$ design  in a $2$-$(v,3,1)$ design for any $v \equiv 1,3 \pmod{6}$,
$v \geq 15$.
\item 
It is shown in \cite[Theorem 3.1]{SW} that the maximum number of quadrilaterals in
a $2$-$(v,3,1)$ design is $v(v-1)(v-3)/24$, and this maximum is attained if an only if
the design is isomorphic to the point-line structure of the projective geometry PG$(n,2)$ for some integer $n \geq 2$.
Take any $2$-$(v,3,1)$ design that is not isomorphic to the projective geometry
PG$(n,2)$ (this can be done provided $v \equiv 1,3 \pmod{6}$,
$v \geq 9$. It is easy to see that design must contain three non-collinear points that are not contained
in a quadrilateral. By relabelling points in the design, we can assume that 
the three non-collinear points 
are denoted $1,2$ and $3$, and they are contained in the three blocks
in (\ref{set2}). Moreover, $\{4,5,6\}$ is not a block in this design because the three points
$1,2,3$ are not contained in a quadrilateral.
\end{enumerate}
Now we take the union of the blocks in the two 
$2$-$(v,3,1)$ designs constructed above. The result is a $2$-$(v,3,2)$ design that contains
the seven blocks in  (\ref{set1}) and  (\ref{set2}). We have already noted
that $\{4,5,6\}$ is not a block in the second design. It is also not a block in the
first design because the pairs $\{4,5\}$, $\{4,6\}$ and $\{5,6\}$ occur in three
different blocks in this design.

As a consequence of this discussion and Theorem \ref{thm:lowerbound}, the following
result is immediate.

\begin{thm}
Suppose $v \equiv 1,3 \pmod{6}$, $v \geq 15$. Then there exists a 
$2$-$(v,3,2)$ design  $(X,\cB)$ such that  $\gamma_2(\cB)=3$.
\end{thm}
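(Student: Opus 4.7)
The plan is to exploit Theorem \ref{thm:lowerbound}: it suffices to exhibit, for each admissible $v$, a $2$-$(v,3,2)$ design containing the seven specified blocks (the Pasch configuration $\{1,2,3\}, \{1,4,5\}, \{2,4,6\}, \{3,5,6\}$ together with the triples $\{1,2,4\}, \{1,3,5\}, \{2,3,6\}$) but not containing $\{4,5,6\}$. I will build this design as the block-wise union of two carefully chosen Steiner triple systems on the same point set $X$.

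For the first ingredient, I would start from the unique $2$-$(7,3,1)$ design (the Fano plane), relabelled so that the Pasch configuration of (\ref{set1}) appears among its blocks; since the Fano plane contains seven Pasch configurations, such a labelling exists. Then, invoking the Doyen--Wilson theorem, I embed this $2$-$(7,3,1)$ design as a subdesign of a $2$-$(v,3,1)$ design on the full point set $X$ for every admissible $v \equiv 1,3 \pmod 6$ with $v \geq 15$. This produces an STS containing the four blocks of (\ref{set1}), and in which the pairs $\{4,5\}$, $\{4,6\}$, $\{5,6\}$ already appear inside the Fano subdesign (so they cannot appear together in any other block of this STS, in particular not in $\{4,5,6\}$).

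For the second ingredient, I would choose any $2$-$(v,3,1)$ design on $X$ that is \emph{not} isomorphic to a projective geometry $\mathrm{PG}(n,2)$; such a design exists for every $v \equiv 1,3 \pmod 6$ with $v \geq 9$, since $\mathrm{PG}(n,2)$ has $v = 2^{n+1}-1$. By the result \cite[Theorem 3.1]{SW}, only projective geometries attain the maximum number $v(v-1)(v-3)/24$ of Pasch configurations, so a non-projective STS must contain three pairwise collinear-through-distinct-blocks points not jointly lying in any Pasch configuration. After relabelling, these points are $1,2,3$ and the three blocks through the pairs $\{1,2\}$, $\{1,3\}$, $\{2,3\}$ are exactly the blocks of (\ref{set2}); crucially, $\{4,5,6\}$ is not a block, because its presence together with (\ref{set2}) would form a Pasch configuration on $1,2,3$.

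The union of the two STSs is a $2$-$(v,3,2)$ design whose block set contains the seven blocks of (\ref{set1}) and (\ref{set2}), but not $\{4,5,6\}$ (ruled out in both components as explained above). Theorem \ref{thm:lowerbound} then forces $\gamma_2(\cB) > 2$, while Theorem \ref{thm:k-unif} gives $\gamma_2(\cB) \leq k = 3$, yielding equality. The main obstacle is the relabelling step: one must verify simultaneously in the two components that the chosen points $1,\ldots,6$ play the right roles, and that the choice of non-projective STS in the second ingredient is always possible for the stated range of $v$ --- both points are handled by the existence results cited above.
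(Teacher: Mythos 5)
Your proposal is correct and follows essentially the same route as the paper: embed a Fano plane containing the Pasch configuration~(\ref{set1}) via Doyen--Wilson, take a second STS$(v)$ that is not a projective geometry so that some three non-collinear points avoid every quadrilateral (giving~(\ref{set2}) without $\{4,5,6\}$), take the union, and invoke Theorem~\ref{thm:lowerbound}. The only cosmetic difference is that you cite Theorem~\ref{thm:k-unif} again for the upper bound, whereas Theorem~\ref{thm:lowerbound} already delivers $\gamma_2(\cB)=3$ outright.
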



We now give a more general version of this construction, starting from an
arbitrary trade. We recall some definitions from 
\cite{FGG}. A \emph{trade} is a set $\mathbf{T}$ of two (finite) subsets of blocks of
size three, say $\mathbf{T} = \{T_1, T_2\}$ that satisfies the following
properties:
\begin{enumerate}
\item each $T_\ell$ ($\ell = 1,2$) is a partial Steiner triple system (i.e., no pair of points
occurs in more than one block)
\item $T_1 \cap T_2 = \emptyset$
\item the set of pairs contained in the blocks in $T_1$ is identical to the set of pairs
contained in the blocks in $T_2$.
\end{enumerate}

As an example, if 
\[T_1 = \{ \{1,2,3\}, \{1,4,5\}, \{2,4,6\}, \{3,5,6\} \} \]
and 
\[T_2 = \{ \{1,2,4\}, \{1,3,5\}, \{2,3,6\}, \{4,5,6\} \} ,\]
then $\mathbf{T} = \{T_1, T_2\}$ is a trade.

The \emph{volume} of a trade $\mathbf{T} = \{T_1, T_2\}$, which is denoted
$\mathit{vol}(\mathbf{T})$, is the number of blocks in $T_1$ 
(or, equivalently, the number of blocks in $T_2$).
The \emph{foundation} of $\mathbf{T}$, denoted $\mathit{found}(\mathbf{T})$, is the set of points
covered by the blocks in $T_1$ (or, equivalently, the set of points
covered by the blocks in $T_2$).

In the above example, $\mathit{vol}(\mathbf{T}) = 4$ and 
$\mathit{found}(\mathbf{T}) = \{1,2,3,4,5,6\}$.

The following lemma is easy to prove. 

\begin{lem}
\label{lem:trade}
$\mathbf{T} = \{T_1, T_2\}$ is a trade. Then
\[ \sum_{B \in T_1} \delta^2(B) = \sum_{B \in T_2} \delta^2(B).\]
\end{lem}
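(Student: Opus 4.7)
The plan is to verify the asserted vector identity coordinate-by-coordinate, with coordinates indexed by subsets $J \subseteq X$ of cardinality at most $2$. For each such $J$, the $J$-coordinate of $\sum_{B \in T_\ell} \delta^2(B)$ is simply the number of blocks of $T_\ell$ that contain $J$, so the task reduces to showing that this count is the same for $T_1$ and $T_2$ in each of the three cases $|J| = 2$, $|J| = 1$, and $|J| = 0$.

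The $|J|=2$ case is immediate from trade property 3: since each $T_\ell$ is a partial Steiner triple system (property 1), any pair is contained in at most one block of $T_\ell$, and property 3 says that the set of covered pairs is the same for $T_1$ and $T_2$. Thus the pair-indexed coordinates of the two sums agree.

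For the $|J|=1$ case, say $J = \{i\}$, I would observe that in a partial Steiner triple system each block through $i$ contributes exactly two distinct pairs through $i$ to the covered-pair set, and no two such blocks share any such pair. Therefore the number of blocks through $i$ in $T_\ell$ equals one half of the number of covered pairs through $i$ in $T_\ell$. By property 3 the latter quantity is the same for $T_1$ and $T_2$, hence so is the former. The $J = \emptyset$ case is essentially the same bookkeeping: $|T_\ell|$ equals one third of the total number of covered pairs, again by the partial-STS property, and the pair sets agree.

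The only real obstacle is conceptual: recognizing that trade property 1 (partial STS) is the bridge allowing property 3 (agreement on the pair level) to propagate down to agreement on the singleton and empty-set coordinates. Once that is in place, the proof is a short three-case calculation, and in fact a cleaner presentation would unify the three cases by noting that for any $J$ with $|J| \le 2$, the number of blocks of $T_\ell$ containing $J$ is a fixed function (depending on $|J|$) of the restriction of $T_\ell$'s covered pair-set to pairs containing $J$.
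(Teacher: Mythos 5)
Your proof is correct and takes essentially the same coordinate-by-coordinate approach as the paper: the pair-level coordinates agree by trade property 3, and the point-level coordinates agree because (in a partial Steiner triple system) each block through a point $i$ contributes exactly two covered pairs through $i$, so the block count through $i$ is half the covered-pair count through $i$. You handle the empty-set coordinate slightly more explicitly than the paper (which treats $|T_1|=|T_2|$ as part of the definition of volume rather than deriving it from the pair count), but the underlying idea is identical.
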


\begin{proof}
The definition of a trade $\mathbf{T} = \{T_1, T_2\}$ 
ensures that $T_1$ and $T_2$ cover the same set of pairs.
So we just need to prove that $T_1$ and $T_2$ contain the same points with the 
same multiplicities. Suppose $i \in \mathit{found}(\mathbf{T})$. Let 
\[ N_\ell = \{ j : \{i,j\} \text{ is contained in a block in } T_1 \} .\]
Then it is easy to see that $i$ is contained in $|N_\ell|/2$ blocks in each of
$T_1$ and $T_2$.
\end{proof}

The following is a slight generalization of Theorem \ref{thm:lowerbound}.
We omit the proof, which makes use of Lemma \ref{lem:trade}, 
since it is essentially the same.

\begin{thm}
\label{thm:lowerbound2}
Let $\mathbf{T} = \{T_1,T_2\}$ be a trade. 
Suppose $B = \{h,i,j\} \in T_2$.
Suppose that $(X, \cB)$ is  $2$-$(v,3,2)$ design
such that \[ T_1 \cup (T_2 \setminus \{B\}) 
\subseteq \cB\]
and \[B \not\in \cB.\]
Then $\gamma_2(\cB) =3$.
\end{thm}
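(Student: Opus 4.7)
The plan is to mirror the proof of Theorem~\ref{thm:lowerbound}, using Lemma~\ref{lem:trade} in place of the explicit identity~(\ref{eq5}). By Theorems~\ref{thm:tover2} and~\ref{thm:k-unif} we already have $2 \le \gamma_2(\cB) \le 3$, so the only thing to rule out is $\gamma_2(\cB) = 2$. Assume for contradiction that $\cI(\cB)$ admits a generating set $\cG$ of polynomials of degree at most~$2$.

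Since $\mathbf{T} = \{T_1, T_2\}$ is a trade and $B \in T_2$, Lemma~\ref{lem:trade} gives
\[
\delta^2(B) \;=\; \sum_{B' \in T_1} \delta^2(B') \;-\; \sum_{B' \in T_2 \setminus \{B\}} \delta^2(B').
\]
By hypothesis, every block on the right-hand side lies in $\cB$. Now take any $g \in \cG$; since $\deg g \le 2$, Lemma~\ref{lem:innerprod} applies with $s=2$, so $g(C) = \delta^2(C) \cdot c(g)$ for every $k$-set $C$. Taking the dot product of the displayed identity with $c(g)$ and using $g(B') = 0$ for all $B' \in T_1 \cup (T_2 \setminus \{B\}) \subseteq \cB$, I get $g(B) = 0$.

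The final step is to propagate this from the generators to the whole ideal. Any $f \in \cI(\cB) = \langle \cG\rangle$ has the form $f = \sum_i h_i g_i$ with $g_i \in \cG$ and $h_i \in \cR$, so evaluating at $\bc_B$ yields $f(\bc_B) = \sum_i h_i(\bc_B)\, g_i(\bc_B) = 0$. This means $\bc_B \in \cZ(\cI(\cB))$. But $\cZ(\cI(\cB)) = \{\bc_{B'} : B' \in \cB\}$ (since $\cI(\cB)$ is by definition the ideal of this finite variety), which contradicts $B \notin \cB$. The only subtle point is the jump from ``every generator vanishes at $\bc_B$'' to ``every ideal element does,'' but this is immediate from the product rule above and requires no further machinery. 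Hence $\gamma_2(\cB) > 2$, completing the proof.
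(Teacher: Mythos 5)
Your proof is correct and follows exactly the approach the paper intends, which is to mirror the proof of Theorem~\ref{thm:lowerbound} with Lemma~\ref{lem:trade} replacing the explicit identity~(\ref{eq5}): after establishing $2 \le \gamma_2(\cB) \le 3$, express $\delta^2(B)$ as an integer linear combination of the vectors $\delta^2(B')$ with $B' \in T_1 \cup (T_2 \setminus \{B\}) \subseteq \cB$, so that any degree-two generator of $\cI(\cB)$ would be forced to vanish at $\bc_B$, whence $\bc_B \in \cZ(\cI(\cB))$, contradicting $B \notin \cB$. You have even been somewhat more explicit than the paper's proof of Theorem~\ref{thm:lowerbound} in spelling out the step from "every generator vanishes at $\bc_B$" to "every element of the ideal does," which the paper leaves implicit (tacitly, one also replaces each degree-$2$ generator by its multilinear reduction modulo $\cT$ before invoking Lemma~\ref{lem:innerprod}, but this is harmless since $\cT$ vanishes on all $k$-set characteristic vectors).
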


\begin{thm}
Suppose that $\mathbf{T} = \{T_1,T_2\}$ is a trade, where
$|\mathit{found}(\mathbf{T})| = n$. Let $B \in T_1$. 
Suppose $v \equiv 1,3 \pmod{6}$, $v \geq 2n+3$. Then there exists a 
$2$-$(v,3,2)$ containing all the blocks in $T_1 \cup (T_2 \setminus \{B\})$, 
such that  $\gamma_2(\cB) =3$.
\end{thm}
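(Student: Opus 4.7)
The plan is to construct a $2$-$(v,3,2)$ design $(X,\cB)$ of the form required by Theorem \ref{thm:lowerbound2} and then read off $\gamma_2(\cB)=3$ directly from that theorem. Since a trade $\mathbf{T}=\{T_1,T_2\}$ is symmetric in its two parts, we may relabel so that the distinguished block $B=\{h,i,j\}$ lies in $T_2$; the goal is then to exhibit a $2$-$(v,3,2)$ design $(X,\cB)$ with $T_1 \cup (T_2\setminus\{B\}) \subseteq \cB$ and $B\notin\cB$.

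The key idea is to realize $\cB$ as the multiset union $\cB_1 \cup \cB_2$ of two Steiner triple systems on a common $v$-element vertex set $X$, chosen so that $T_1\subseteq\cB_1$, $T_2\setminus\{B\}\subseteq\cB_2$, and $B$ is a block of neither. Because each STS covers every pair exactly once, the union covers every pair exactly twice, so $\cB$ is a $2$-$(v,3,2)$ design of the desired form.

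For $\cB_1$ I would invoke the theorem of Bryant and Horsley on embedding partial Steiner triple systems: any partial STS on at most $m$ points can be completed to an STS of any admissible order $v\geq 2m+1$. Since $T_1$ is supported on at most $n$ points, the hypothesis $v\geq 2n+3>2n+1$ certainly suffices. The block $B$ is automatically excluded from $\cB_1$: the three pairs inside $B$ are already covered by blocks of $T_1$ (because in a trade $T_1$ and $T_2$ cover identical pair-sets, and $B\in T_2$ covers these pairs), and an STS cannot reuse a pair.

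For $\cB_2$ the block $B$ must be actively excluded. I would choose a point $a\in X\setminus\mathit{found}(\mathbf{T})$, which is possible since $v\geq 2n+3>n$, and set $P=(T_2\setminus\{B\})\cup\{\{h,i,a\}\}$. A short check shows $P$ remains a partial STS on at most $n+1$ points: the pair $\{h,i\}$ was uncovered in $T_2\setminus\{B\}$, and the other two pairs introduced by $\{h,i,a\}$ involve the fresh point $a$. Invoking Bryant--Horsley once more embeds $P$ in an STS $(X,\cB_2)$ provided $v\geq 2(n+1)+1=2n+3$, which is precisely our hypothesis; since $\{h,i\}$ is now used by $\{h,i,a\}\in\cB_2$, the triple $B$ cannot also lie in $\cB_2$. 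The main obstacle is calibrating the sharp bound: the ``$+3$'' in $v\geq 2n+3$ matches exactly the cost of introducing the one extra point $a$ needed to forbid $B$ in the second embedding. With $\cB_1$ and $\cB_2$ in hand, setting $\cB=\cB_1\cup\cB_2$ and applying Theorem \ref{thm:lowerbound2} finishes the proof.
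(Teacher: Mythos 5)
Your construction is essentially identical to the paper's proof: both relabel so that the excluded block lies in $T_2$ (the paper's statement actually has a typo saying $B\in T_1$, but its proof uses $B\in T_2$, which your relabeling correctly anticipates), embed $T_1$ in one STS via Bryant--Horsley, embed $(T_2\setminus\{B\})\cup\{\text{a triple replacing }B\text{ using one fresh point}\}$ in a second STS, take the union, and invoke Theorem~\ref{thm:lowerbound2}. The only cosmetic difference is which of the three pairs in $B$ your replacement triple retains; the bound $v\ge 2n+3$ is justified the same way in both.
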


\begin{proof}
$T_1$ is a partial Steiner triple system on $n$ points.
The famous result of Bryant and Horsley \cite{BH} shows that $T_1$ can be embedded
in a $2$-$(v,3,1)$ design for any $v \equiv 1,3 \pmod{6}$, $v \geq 2n+1$. 

Suppose $B = \{ h,i,j \}$ and let 
$B^* = \{i,j,\ell \}$, where $\ell \not\in \mathit{found}(\mathbf{T})$.
Define $T_2^* =  (T_2 \setminus \{B\}) \cup \{B^*\}$.
$T_2^*$ is a partial Steiner triple system on $n+1$ points, so (again, from \cite{BH})
it can be embedded
in a $2$-$(v,3,1)$ design for any $v \equiv 1,3 \pmod{6}$, $v \geq 2(n+1)+1$. 
So for $v \equiv 1,3 \pmod{6}$, $v \geq 2n+3$, we have two
$2$-$(v,3,1)$ designs (which we can assume are defined on the same set of points),
say $(Y,\cB_1)$ and $(Y,\cB_2)$, such that $T_1 \subseteq \cB_1$ and
$T_2^* \subseteq \cB_2$. Then $(Y,\cB_1 \cup \cB_2)$ is a $2$-$(v,3,2)$ design that contains
all the blocks in $T_1 \cup (T_2 \setminus \{B\})$. 

We claim that $B$ is not a block in $\cB_1 \cup \cB_2$. First, 
there is a unique block  $B' \in \cB_1$ that contains the pair $\{i,j\}$, and this
block $B'$  is one of the blocks in $T_1$. Because $B' \in T_1$ and $B \in T_2$, 
it follows that $B' \neq B$. Therefore $B \not\in \cB_1$.
To see that $B \not\in \cB_2$, we observe that the unique block in $\cB_2$ that contains
the pair $\{i,j\}$ is $B^* \neq B$.

Thus we have shown that $(Y,\cB_1 \cup \cB_2)$ satisfies the hypotheses of 
Theorem \ref{thm:lowerbound2}, and the proof is complete.
\end{proof}

\section{Strength greater than two} 
\label{sec:highstrength}

We finish by addressing the ideals of non-trivial $t$-($v,k,\lambda$) designs with $t>2$. For Steiner systems (where 
$\lambda=1$), we have
$$ \frac{t+1}{2} \le \gamma_1 (\cB)  \le \gamma_2 (\cB) \le t $$
using Theorem \ref{thm:tover2} and Theorem \ref{thm:Steiner}. When $\lambda > 1$, the lower
bound still holds and we may apply Lemma \ref{lem:gam1upper}: since the number of blocks is 
$$ | \cB | = \lambda \binom{v}{t} \binom{k}{t}^{-1}, $$
we find $\gamma_1(\cB) \le s$ whenever $\binom{v}{s} \binom{k}{t} > \lambda \binom{v}{t} $. We also have 
Corollary \ref{cor:maxintersection} which tells us that $\gamma_2(\cB) \le m+1$ when $m$ is 
the maximum size of the intersection of two distinct blocks.

Let $(X,\cB)$ be a $t$-($v,k,\lambda$)  design. For $i\in X$, the \emph{derived design} of $(X,\cB)$  with respect
to $i$ is the ordered pair $(\dot{X}, \dot{\cB})$ where $\dot{X} = X \setminus \{i\}$ and 
$$ \dot{ \cB } = \left\{ B\setminus \{i\} \mid i \in B \in \cB \right\}. $$
The \emph{residual design} of $(X,\cB)$  with respect to $i$ has vertex set $\dot{X}$ and block set $\{ B\in \cB \mid i \not\in B\}$.  If $(X,\cB)$ is a $t$-design, then both its derived design and its residual design are $(t-1)$-designs. 

\begin{lem}
\label{lem:derived}
Let $(X,\cB)$ be a non-trivial $t$-design with $i \in X$. With notation as above $\gamma_h(\dot{\cB}) \le \gamma_h(\cB)$ for $h=1,2$. The same inequalities hold for the residual design. 
\end{lem}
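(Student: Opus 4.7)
The plan is to use the substitution ring homomorphisms $\phi_i : \cx[\bx] \to \cx[\dot\bx]$ sending $x_i \mapsto 1$ and $\psi_i : \cx[\bx] \to \cx[\dot\bx]$ sending $x_i \mapsto 0$ (both fixing the other variables). First I would verify three basic facts about $\phi_i$ (with analogues for $\psi_i$): it is a surjective ring homomorphism; it maps $\cI(\cB)$ onto $\cI(\dot\cB)$ and $\cT_v$ onto $\dot\cT_{v-1}$; and it does not increase total degree. Surjectivity onto $\cI(\dot\cB)$ is the key step: for any $g \in \cI(\dot\cB)$ the polynomial $x_i \tilde g$ lies in $\cI(\cB)$ and has image $g$ under $\phi_i$. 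The ideal equalities are checked directly on the generators $x_1+\cdots+x_v-k$ and $x_j^2 - x_j$ of $\cT_v$.

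For $\gamma_2$, the argument is immediate: given a generating set $\cG$ of $\cI(\cB)$ with $\deg g \le \gamma_2(\cB)$ for every $g \in \cG$, surjectivity of $\phi_i \colon \cI(\cB) \twoheadrightarrow \cI(\dot\cB)$ guarantees that $\phi_i(\cG)$ generates $\cI(\dot\cB)$, and every member of $\phi_i(\cG)$ has degree at most $\gamma_2(\cB)$. The residual case is handled identically using $\psi_i$.

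For $\gamma_1$, set $d = \gamma_1(\cB)$ and take $F \in \cI(\cB) \setminus \cT_v$ of degree $d$. The identity $\phi_i(F)(\bc_{\dot C}) = F(\bc_{\dot C \cup \{i\}})$ shows that $\phi_i(F) \in \dot\cT_{v-1}$ exactly when $F$ vanishes on every $\bc_C$ with $|C| = k$ and $i \in C$; the symmetric statement holds for $\psi_i$. If $\phi_i(F) \notin \dot\cT_{v-1}$, we immediately obtain a non-trivial element of $\cI(\dot\cB)$ of degree $\le d$, giving $\gamma_1(\dot\cB) \le d$. Otherwise, since $\phi_i(\cT_v) = \dot\cT_{v-1}$, I can choose $\tau \in \cT_v$ with $\phi_i(\tau) = \phi_i(F)$, so $F - \tau \in \ker \phi_i = (x_i - 1)\cx[\bx]$ and we get a decomposition $F = \tau + (x_i - 1) H$ with $\deg H \le d-1$; checking the block conditions shows $H \in \cI(\cB^*)$, and non-triviality of $F$ forces $H \notin \cT^*_v$, producing a non-trivial element of $\cI(\cB^*)$ of degree $\le d - 1$. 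The symmetric argument with $\psi_i$ produces the companion conclusions for the derived design.

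The main obstacle --- and the step requiring the most care --- is to combine these cases so that both $\gamma_1(\dot\cB) \le d$ and $\gamma_1(\cB^*) \le d$ follow simultaneously, rather than merely "at least one of the two." The key observation is that whenever the direct substitution fails for one side, the decomposition $F = \tau + (x_i - 1) H$ (respectively $F = \tau' + x_i H'$) yields a polynomial witnessing the \emph{strictly sharper} bound $\gamma_1 \le d - 1$ on the opposite side. Running the analyses for $\phi_i$ and $\psi_i$ together and playing off these strict bounds against the assumed value $d = \gamma_1(\cB)$ rules out the possibility that either $\gamma_1(\dot\cB) > d$ or $\gamma_1(\cB^*) > d$, proving both halves of the lemma.
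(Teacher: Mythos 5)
Your $\gamma_2$ argument is exactly the paper's (the map $\varphi=\phi_i$ is a surjective ring homomorphism $\cI(\cB)\twoheadrightarrow\cI(\dot\cB)$ that does not raise degree, so images of generators generate), and that part is fine. For $\gamma_1$, however, your proposal and the paper diverge in a telling way. The paper simply asserts that $\dot g=\varphi(g)$ is non-trivial ``since $\varphi$ maps trivial ideal to trivial ideal.'' You have correctly noticed that this does not follow: $\varphi(\cT)\subseteq\dot\cT$ gives no control over $\varphi^{-1}(\dot\cT)$, and a non-trivial $F\in\cI(\cB)$ can land in $\dot\cT$, which happens precisely when $F$ vanishes on every $k$-set containing $i$. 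So you are attempting to repair a genuine gap in the paper's own argument.

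Unfortunately your repair does not close the gap either, for two reasons. First, the claim ``$F-\tau=(x_i-1)H$ with $\deg H\le d-1$'' is not automatic: you need $\tau\in\cT$ with $\phi_i(\tau)=\phi_i(F)$ \emph{and} $\deg\tau\le d$, and while this is plausible (e.g.\ via a degree-compatible Gr\"obner representation of $\dot\cT$), you do not establish it; an arbitrary preimage in $\cT$ may have large degree, which would destroy the degree bound on $H$. Second, and more seriously, the final case analysis is asserted rather than carried out, and as described it does not yield both inequalities. Concretely, suppose $\phi_i(F)\in\dot\cT$ but $\psi_i(F)\notin\cT^*$. Your decomposition then produces a non-trivial element of $\cI(\cB^*)$ of degree $\le d-1$, so $\gamma_1(\cB^*)\le d-1$, and $\psi_i(F)$ also gives $\gamma_1(\cB^*)\le d$. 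But nothing in this case says anything about $\gamma_1(\dot\cB)$: the ``sharper bound on the opposite side'' does not feed back into a bound on the derived design, and there is no contradiction with the hypothesis that $\gamma_1(\dot\cB)>d$. What you \emph{do} obtain is the weaker dichotomy: for every non-trivial $F\in\cI(\cB)$ of degree $d$, at least one of $\phi_i(F),\psi_i(F)$ is non-trivial (since if both were trivial, $F$ would vanish on all $k$-sets and hence lie in $\cT$); this gives $\min\{\gamma_1(\dot\cB),\gamma_1(\cB^*)\}\le d$, but not the two separate bounds the lemma asserts. So the step ``rules out the possibility that either $\gamma_1(\dot\cB)>d$ or $\gamma_1(\cB^*)>d$'' is the missing idea; as written, the argument only rules out that \emph{both} exceed $d$.
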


\begin{proof}
We handle the case of the derived design; the computations for the residual design are similar. To simplify
the notation, we take $i=1$.  Let $\sI = \cI(\cB)$
and define ideal $\sJ$ as the image of $\sI$ under the ring homomorphism $\varphi : \cx[x_1,\ldots,x_v] \rightarrow  \cx[x_2,\ldots,x_v] $ mapping $x_1$ to $1$ and mapping each $x_j$ to itself for $j=2,\ldots,v$.  For $g\in \sI$ write 
$\dot{g} := \varphi(g) \in \sJ$. For any $(k-1)$-set $C \subseteq \{2,\ldots, v\}$, we have $\dot{g}(C) = g( C \cup \{1\})$
and so, for $C \in \dot{\cB}$ we have $\dot{g}(C)=0$ for all $\dot{g} \in \sJ$ and, for $C \not\in \dot{\cB}$, there exists
$\dot{g} \in \sJ$ for which $\dot{g}(C) \neq 0$. It follows that, if $\cG$ is a generating set for $\sI$, then $\varphi(\cG)$
is a generating set for $\sJ$. This shows $\gamma_2(\dot{\cB}) \le \gamma_2(\cB)$. Next, if $g$ is a non-trivial
polynomial in $\sI$ of smallest degree, then $\dot{g}$ has degree no larger than the degree of $g$ and is also non-trivial since $\varphi$ maps trivial ideal to trivial ideal. 
\end{proof}

We illustrate this and other results in this paper by recording, in the following table, the exact value of
these parameters for the Witt designs and the $t$-designs appearing as their derived designs.
Up to isomorphism, there are unique block designs with parameters $5$-$(24,8,1)$, 
$4$-$(23,7,1)$,  $3$-$(22,6,1)$,  $5$-$(12,6,1)$,  $4$-$(11,5,1)$ and $3$-$(10,4,1)$. One accessible source of information on the Witt designs is the note \cite{brouwerWitt} by Andries Brouwer.

\begin{center}
\begin{tabular}{|c|c|c|l|} \hline $t$-($v,k,\lambda$) & $\gamma_1(\cB)$ & $\gamma_2(\cB)$ &  Notes \\ \hline
$5$-$(24,8,1)$ &           3         &         3      &    Theorems \ref{thm:tover2}, \ref{thm:witt24} \\
$4$-$(23,7,1)$ &           3         &         3      &    Theorem \ref{thm:witt23} \\          
$3$-$(22,6,1)$ &           2         &         2      &    Theorem \ref{thm:witt22} \\     
$2$-$(21,5,1)$ &           2         &         2      &    Theorem \ref{thm:projective} \\ \hline
$5$-$(12,6,1)$ &          3          &        3      &   discussion below \\
$4$-$(11,5,1)$ &          3          &        3      &    Lemma \ref{lem:derived} \\
$3$-$(10,4,1)$ &          2         &         2       &     discussion below \\
$2$-$(9,3,1)$  &           2         &         2       &     Theorems \ref{thm:tover2}, \ref{thm:Steiner} \\ \hline
\end{tabular}
\end{center}

In the large Witt design, with parameters $5$-($24,8,1$), blocks intersect in 0, 2 or 4 points. So we might start 
with the zonal polynomials
$$ (\bc_B \cdot \bx )  (\bc_B \cdot \bx - 2)  (\bc_B \cdot \bx - 4)  (\bc_B \cdot \bx - 8) $$
where $B \in \cB$. We know that the blocks of this design are the supports of the minimum weight codewords
in the extended binary Golay code.  We may then use the fact that this is a self-dual code to show that these,
together with the generators of $\cT$, generate our ideal.  But we can do better.

\begin{thm}
\label{thm:witt24}
Let $(X,\cB)$ be the $5$-$(24,8,1)$ design. For a block $B \in \cB$ and points $i,j\in B$, define
$$ f_{B,i,j}(\bx) = (x_i-x_j )  (\bc_B \cdot \bx - 2)  (\bc_B \cdot \bx - 4)  . $$
Then
\begin{description}
\item[(i)]  $\cI(\cB)$ is generated by $\cG_0 \cup \{ f_{B,i,j} \mid i,j \in B\in \cB  \} $;
\item[(ii)] $\gamma_1(\cB) = \gamma_2(\cB) = 3$.
\end{description}
\end{thm}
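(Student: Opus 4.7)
The plan is to mirror the structure of Theorem~\ref{thm:Steiner}, using Proposition~\ref{prop:allradical} to get radicality for free once $\cG_0$ sits inside the generating set. So I need to verify (a) each $f_{B,i,j}$ lies in $\cI(\cB)$, and (b) the finite variety $\cZ(\sI)$, for $\sI = \langle \cG_0 \cup \{f_{B,i,j}\}\rangle$, contains no 01-vector of weight $8$ outside $\{\bc_B : B \in \cB\}$. Once these hold, Proposition~\ref{prop:coordring}{\it (ii)} gives (i), and the $t=5$ case of Theorem~\ref{thm:tover2} combined with $\gamma_1 \le \gamma_2 \le 3$ gives (ii).

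For (a), I will check $f_{B,i,j}(\bc_{B'}) = 0$ for every $B' \in \cB$ by cases on $|B \cap B'|$. The key input is the classical fact that distinct blocks of the large Witt design intersect in $0$, $2$, or $4$ points, which follows from the extended binary Golay code being doubly even of minimum weight $8$ (an intersection of size $6$ would produce a codeword of weight $4$). If $|B \cap B'| \in \{2,4\}$, one of the linear factors $(\bc_B\cdot\bx-2)$ or $(\bc_B\cdot\bx-4)$ vanishes at $\bc_{B'}$; if $|B\cap B'|=0$, then $i,j \in B$ are both absent from $B'$, so $x_i-x_j$ vanishes; and if $B'=B$, then $x_i-x_j$ again vanishes since $i,j\in B$.

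The main obstacle is (b): ruling out 8-sets $C \not\in \cB$ from $\cZ(\sI)$. Here I will appeal to self-duality of the Golay code. The blocks generate the extended binary Golay code, and the code is self-dual, so any $\bc_C \in \{0,1\}^{24}$ having even intersection with every block lies in the code; if in addition $|C|=8$, then $C$ is a minimum-weight codeword, hence a block. So if $|C|=8$ and $C \not\in \cB$, some block $B$ meets $C$ in an odd number of points. Since $0 < |B\cap C| < 8$, I can choose $i \in B\cap C$ and $j \in B\setminus C$; then $(x_i-x_j)(\bc_C)=1$ and $(|B\cap C|-2)(|B\cap C|-4)$ is a product of two odd integers, so $f_{B,i,j}(\bc_C) \neq 0$. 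This coding-theoretic lemma is the one substantive step; everything else is bookkeeping.

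With $\cZ(\sI) = \{\bc_B : B \in \cB\}$ established and $\sI$ radical by Proposition~\ref{prop:allradical}, Proposition~\ref{prop:coordring}{\it (ii)} finishes part (i). For part (ii), each $f_{B,i,j}$ has total degree $3$, so $\gamma_2(\cB) \le 3$, while Theorem~\ref{thm:tover2} applied to the $5$-design gives $\gamma_1(\cB) > 5/2$, whence $\gamma_1(\cB)\ge 3$. Sandwiched between these, $\gamma_1(\cB) = \gamma_2(\cB) = 3$.
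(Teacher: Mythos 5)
Your proof is correct and follows essentially the same approach as the paper: verify that each $f_{B,i,j}$ vanishes on all blocks using the intersection pattern $\{0,2,4,8\}$, then use self-duality of the extended binary Golay code (generated by its weight-$8$ codewords) to show that any weight-$8$ vector on which all $f_{B,i,j}$ vanish must have even intersection with every block and hence lie in the code, forcing it to be a block; Proposition~\ref{prop:allradical} and Proposition~\ref{prop:coordring}(ii) then give (i), and the sandwich $3 \le \gamma_1 \le \gamma_2 \le 3$ from Theorem~\ref{thm:tover2} gives (ii). Your direct case analysis for vanishing on blocks and the contrapositive framing for excluding non-blocks are minor stylistic variations on the paper's argument, and your aside that doubly-evenness plus $d=8$ forces intersections in $\{0,2,4\}$ is a clean justification of a fact the paper simply tabulates.
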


\begin{proof}
We know $\gamma_1(\cB) \ge 3$ by Theorem \ref{thm:tover2}. 

For any block $B\in \cB$, the number $m_i$  of  blocks $B' \in \cB$ with $|B \cap B'| = i$ is given by
\begin{center}
\begin{tabular}{c|rrrrrrrrr}
$i$ &   8      &    7     &     6     &    5    &    4    &    3    &    2     &    1    &     0    \\ \hline
$m_i$  &1&0&0  &   0     & 280   &   0     &   448  &   0     &   30
\end{tabular}
\end{center}
So the zonal polynomial (cf. Corollary \ref{cor:degree})  $\prod_{i=8,4,2,0} ( \bc_B \cdot \bx - i)$ belongs to 
$\cI(\cB)$ and the quadratic polynomial $(\bc_B \cdot \bx-2)(\bc_B \cdot \bx-4)$ vanishes on every block
except $B$ itself and those blocks disjoint from $B$.  But if $i$ and $j$ both belong to block $B$,  the linear 
function $x_i-x_j$ vanishes on $\bc_B$ and on $\bc_{B'}$ for any block $B'$ disjoint from $B$. 
To show that the polynomials $f_{B,i,j}$  --- as $B$ ranges over the blocks and $i$, $j$ range over the elements 
of $B$ --- together with the polynomials in the trivial ideal, generate our ideal,
we employ two basic facts about the extended binary Golay code $\mathsf{G}_{24}$. The blocks in 
$\cB$ are precisely the  supports of minimum weight codewords in this code. This is a self-dual code, 
so a binary tuple $c \in \FF_2^{24}$ satisfies  $c \in \mathsf{G}_{24}$ if and only if the mod 2
dot product $c \cdot c'$ is zero for every $c' \in \mathsf{G}_{24}$. Since $\mathsf{G}_{24}$ is generated by its 
weight eight codewords, we may say $c \in \mathsf{G}_{24}$ if and only if its inner product with these 759 
codewords is zero mod two. Since we only want to recover the codewords of weight eight, we may omit 
integer inner product six. 

Let $\sI = \langle \cG_0 \cup \{ f_{B,i,j} \mid i,j\in B\in \cB  \} \rangle$ and observe that any element of $\cZ(\sI)$
has exactly eight entries equal to one and sixteen entries equal to zero. For $c \in \FF_2^{24}$, 
let $\bc \in \re^{24}$ be the corresponding 01-vector with real entries:  $\bc_j=1$ if $c_j=1$ and $\bc_j=0$ 
if $c_j=0$.  Assuming $\bc \in \cZ(\sI)$, we have  $f_{B,i,j}(\bc)=0$ for each $B\in \cB$ and each $i,j\in B$ which implies that either $\bc_B \cdot \bc \in \{2,4\}$ or $c_i=1 \Leftrightarrow c_j=1$ for all $i,j\in B$. This latter
alternative clearly means that either $\bc=\bc_B$ or $\bc \cdot \bc_B=0$.  For the corresponding binary 
vectors, this implies $c \cdot c' =0$  for each $c' \in  \mathsf{G}_{24}$ with Hamming weight eight. 
As outlined above, this gives $c\in \mathsf{G}_{24}$
and, in turn, $\bc = \bc_{B'}$ for some $B'\in \cB$.  By Propositions 
\ref{prop:allradical} and \ref{prop:coordring}{\it (ii)}, we have $\sI=\cI(\cB)$. 
\end{proof}

\begin{thm}
\label{thm:witt23}
Let $(X,\cB)$ be the $4$-$(23,7,1)$ design. For three distinct points $i$, $j$ and $k$, let $\cC_{i,j,k} = \left\{ C=B \setminus \{i,j,k\} \mid \{i,j,k\}  \subseteq B\in \cB \right\}$ and define
$$ h_{i,j,k}(\bx) = 3 + 12 x_i x_j x_k - 3 ( x_i x_j + x_i x_k + x_j x_k ) - \sum_{C \in \cC_{i,j,k} } x^{C,2}~. $$
Then
\begin{description}
\item[(i)]  $\cI(\cB)$ is generated by $\cG_0 \cup \{ h_{i,j,k} \mid i,j,k \} $;
\item[(ii)] $\gamma_1(\cB) = \gamma_2(\cB)=3$;
\item[(iii)] the coordinate ring $\cx[\bx]/\cI(\cB)$ admits a basis consisting of those $\binom{23}{2}$ cosets
$x_i x_j + \cI(\cB)$ represented by multilinear monomials of degree two.
\end{description}
\end{thm}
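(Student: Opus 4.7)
The plan is a dimension count in the quotient ring modulo $\sI = \langle \cG_0 \cup \{h_{i,j,k}\}\rangle$, matching $|\cB| = \binom{23}{4}/\binom{7}{4} = 253 = \binom{23}{2}$. Two preliminary facts drive the argument. First, $\lambda_3 = 5$, so each triple of points lies in exactly five blocks. Second, any two distinct blocks of $(X,\cB)$ meet in $1$ or $3$ points; this is classical, and can also be deduced by realizing $(X,\cB)$ as the derived design of the $5$-$(24,8,1)$ Witt design (whose blocks intersect in $0$, $2$, or $4$ points). Theorem \ref{thm:tover2} with $t=4$ already gives $\gamma_1(\cB) \geq 3$.

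The first main task is to verify $h_{i,j,k}(\bc_B) = 0$ for every $B \in \cB$. Setting $m = |B \cap \{i,j,k\}|$, the first three terms of $h_{i,j,k}$ evaluate at $\bc_B$ to $3$, to $12$ when $m=3$ (and $0$ otherwise), and to $-3\binom{m}{2}$. The final term evaluates to $-\sum_{B' \supseteq \{i,j,k\}}\binom{|B' \cap B| - m}{2}$, since every such $B'$ satisfies $B' \cap \{i,j,k\} = \{i,j,k\}$. The distribution of intersection sizes $|B' \cap B|$ across the five blocks $B' \supseteq \{i,j,k\}$ is pinned down by the double count $\sum_{B'} |B' \cap B| = m\lambda_3 + (7-m)\lambda_4 = 4m + 7$ together with the constraint $|B' \cap B| \in \{1,3\}$ for distinct blocks (or $=7$ when $B' = B$, possible only if $m=3$). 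A case check over $m \in \{0,1,2,3\}$ then produces $h_{i,j,k}(\bc_B) = 0$ uniformly; the coefficients $3, 12, -3, -1$ are tuned precisely so that all four cases cancel.

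Next, set $\sI = \langle \cG_0 \cup \{h_{i,j,k}\}\rangle \subseteq \cI(\cB)$; this ideal is radical by Proposition \ref{prop:allradical}. Solving $h_{i,j,k} \equiv 0$ mod $\sI$ for the leading cubic term expresses each $x_i x_j x_k$ as a polynomial of degree at most $2$ modulo $\sI$. For any multilinear monomial of degree $d \geq 3$, picking three of its variables and applying this substitution yields a polynomial of degree at most $d - 1$; iterating drives every multilinear monomial of degree $\geq 3$ down to degree at most $2$ modulo $\sI$. Non-multilinear monomials reduce to multilinear ones via $\cT$, and Lemma \ref{lem:monposet} pushes the cosets $1 + \sI$ and each $x_i + \sI$ into the span of degree-$2$ cosets. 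Hence the $\binom{23}{2}=253$ cosets $\{x_ix_j + \sI : i \neq j\}$ span $\cx[\bx]/\sI$, so $\dim \cx[\bx]/\sI \leq 253$.

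Proposition \ref{prop:coordring}(i) gives the matching bound $\dim \cx[\bx]/\sI \geq |\cB| = 253$, so equality holds and Proposition \ref{prop:coordring}(iii) delivers $\sI = \cI(\cB)$, proving (i). The spanning cosets must then be linearly independent, proving (iii). For (ii), $\gamma_2(\cB) \leq 3$ is immediate from the degree of our generators, and $\gamma_1(\cB) \geq 3$ was noted at the start. The main obstacle is the vanishing verification in the second paragraph, which relies critically on the restrictive $\{1,3\}$ intersection profile of this design; if further intersection sizes were permitted, additional terms in $h_{i,j,k}$ would be required to neutralize the extra contributions.
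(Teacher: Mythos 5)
Your proof is correct, and it takes a genuinely different route to $\sI = \cI(\cB)$ than the paper does. The paper proves $\cZ(\sI) = \{\bc_B : B\in\cB\}$ directly: after the vanishing check, it establishes the converse inclusion by showing that for any $7$-set $S\notin\cB$ and any triple $T\subseteq S$, the value $h_{i,j,k}(S)$ cannot vanish — the constraint $\sum\binom{m_\ell}{2}=12$ with $\sum m_\ell = 4$ forces some $m_\ell = 4$, i.e.\ $S=B_\ell$ — and then invokes Proposition \ref{prop:allradical} together with Proposition \ref{prop:coordring}(ii). Part (iii) is then handled afterward by a dimension count. You instead skip the converse-inclusion computation entirely and run the dimension count up front: the cubic leading term of $h_{i,j,k}$ lets you rewrite every $x_ix_jx_k$ modulo $\sI$ in degree $\le 2$, so (with $\cT$ and Lemma \ref{lem:monposet}) the $\binom{23}{2}$ cosets $x_ix_j+\sI$ span, giving $\dim\cx[\bx]/\sI \le 253$; Proposition \ref{prop:coordring}(i) gives the matching lower bound; and Proposition \ref{prop:coordring}(iii) then delivers (i) and (iii) simultaneously. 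This is precisely the strategy the paper uses for Theorem \ref{thm:symbibd} (symmetric BIBDs) but \emph{not} for this theorem, so you have effectively ported the cleaner argument. What your route buys: it avoids the non-block case analysis, and it makes transparent why the $\binom{23}{2}$ degree-two cosets form a basis. What the paper's route buys: it exhibits directly how the generators detect non-blocks, which is combinatorially informative. Your derivation of the intersection distribution via the double count $\sum_{B'}|B'\cap B|=4m+7$ together with the $\{1,3\}$ constraint is also a slightly cleaner packaging of the paper's case-by-case enumeration, and it checks out for all $m\in\{0,1,2,3\}$.
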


\begin{proof} 
Denote by $B_1,B_2,B_3,B_4,B_5$ the five blocks containing $T:=\{i,j,k\}$ and set $C_\ell = B_\ell \setminus T$. 
We know that two distinct blocks of our Witt design intersect in either three points or one point.
We now show that $h_{i,j,k}(B)=0$ for each $B \in \cB$. To illustrate the simple arithmetic involved, we write
$$ h_{i,j,k}(\bx) = 3 + 12( x_i x_j x_k) - 3 ( x_i x_j + x_i x_k + x_j x_k ) - ( x^{C_1,2} )  - ( x^{C_2,2} )  - ( x^{C_3,2} )  - ( x^{C_4,2} )  - ( x^{C_5,2} ) $$
where $B_\ell = C_\ell \cup T$ and we retain most of the parentheses here in our evaluation.

\begin{description}
\item[case (1)]\mbox{\quad} $T \subseteq B$. Here, we have $B=  B_\ell$ for some $\ell\in \{1,\ldots,5\}$ and
$$ h_{i,j,k}(B) =  3 + 12(1) - 3(1+1+1) - (1+1+1+1+1+1)  - (0)  - (0)  - (0)  - (0) = 0 .$$ 
\item[case (2)]\mbox{\quad} $|T\cap B| = 2$. Here we must have $|B \cap B_\ell| =3$ for all $\ell$ and, as $B$
never contains two points from the same ``sub-block'' $C_\ell = B_\ell \setminus T$, we have
$$ h_{i,j,k}(B) =  3 + 12(0) - 3(1) - (0)  - (0)  - (0)  - (0)  - (0) = 0 .$$ 
\item[case (3)]\mbox{\quad} $|T\cap B| = 1$. In this case, as there are five blocks containing $T$ and $|B|=7$, we must have $|B \cap B_\ell| =3$ for exactly three values of  $\ell$ and, as $B$ contains two points from the same sub-block $C_\ell = B_\ell \setminus T$ in each of these three cases, we have
$$ h_{i,j,k}(B) =  3 + 12(0) - 3(0) - (1) - (1) - (1)  - (0)  - (0) = 0 .$$ 
\item[case (3)]\mbox{\quad} $T\cap B = \emptyset$. In this case, as there are five blocks containing $T$ and $|B|=7$, we must have $|B \cap C_\ell | =3$ for some unique sub-block  $C_\ell = B_\ell \setminus T$ and $B$ must contain 
a unique point from each of the other four. In this case, we have
$$ h_{i,j,k}(B) =  3 + 12(0) - 3(0) - (1+1+1) - (0)  - (0)  - (0)  - (0)  = 0 .$$ 
\end{description}

On the other hand, if $S$ is a 7-set of points which is not a block, then $h_{i,j,k}(S) \neq 0$ for any $\{i,j,k\}\subseteq S$. For if $T:=\{i,j,k\}$ is contained in $S$ and $h_{i,j,k}(S) = 0$, then we have
$$ 0 = h_{i,j,k}(S) = 3 + 12(1) -3(1+1+1) - \binom{m_1}{2}  - \binom{m_2}{2}  - \binom{m_3}{2}  - \binom{m_4}{2}  - \binom{m_5}{2} $$
where $m_\ell = |S \cap C_\ell|$. But $m_1+\cdots+m_5 = 4$ and we see that the only arrangement 
that achieves the stated equality is where some $m_\ell = 4$. But then $S = B_\ell$ and we are done. So, if 
$$ \sI = \langle \cG_0 \cup \{ h_{i,j,k} \mid i,j,k \} \rangle$$
we have shown $\cZ(\sI) = \{ \bc_B \mid B \in \cB \} $.  By Proposition \ref{prop:allradical}, $\sI$ is a radical ideal.
so Proposition \ref{prop:coordring}{\it (ii)} gives us $ \sI  =\cI (\cB)$.
This proves that $\gamma_2(\cB)=3$ and, by Theorem \ref{thm:tover2}, $\gamma_1(\cB)=3$ as well.

By Lemma \ref{lem:monposet}, each coset $x_i+\sI$  can be expressed as a linear combination 
of cosets $x_ix_j  + \sI$. Since the number of blocks of the design is $\binom{23}{2}$, we see that the
cosets $\{ x_i x_j + \sI \mid i,j\}$ form a vector space basis for $\cx[\bx]/\sI$. 
\end{proof}

Next, if $(X,\cB)$ denotes the Witt design on $22$ points, Lemma \ref{lem:derived} tells us
$2 \le  \gamma_1(\cB)  \le \gamma_2(\cB) \le 3$. We now show that both values are equal to two.
 
\begin{thm}
\label{thm:witt22}
Let $(X,\cB)$ be the $3$-$(22,6,1)$ design. For two distinct points $i$, $j$ and a block $B$ containing them,
say $B=\{i,j,r,s,t,u\}$, define 
$$ h_{i,j,B}(\bx) = (x_i - x_j)( x_r + x_s + x_t + x_u - 1). $$
Then
\begin{description}
\item[(i)]  $\cI(\cB)$ is generated by $\cG_0 \cup \{ h_{i,j,B} \mid i\neq j, \ i,j \in B, \ B \in \cB \} $;
\item[(ii)] $\gamma_1(\cB) = \gamma_2(\cB)=2$;
\item[(iii)] the coordinate ring $\cx[\bx]/\cI(\cB)$ admits a basis consisting of the $77$ cosets
$x^{B,2}+\cI(\cB)$ obtained as $B$ ranges over the blocks of the design.
\end{description}
\end{thm}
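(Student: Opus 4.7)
The plan is to follow the strategy of Theorem~\ref{thm:witt23}: verify that each $h_{i,j,B}$ vanishes on every $\bc_{B'}$ via a short case analysis on $|B\cap B'|$; show that for each $6$-subset $C\not\in\cB$ some $h_{i,j,B}$ evaluates nonzero on $\bchi_C$; and then invoke Propositions~\ref{prop:coordring} and~\ref{prop:allradical} to conclude $\cI(\cB)=\langle\cG_0\cup\{h_{i,j,B}\}\rangle$.

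I would first establish that two distinct blocks of the $S(3,6,22)$ design meet in exactly $0$ or $2$ points: the Steiner property forces $|B\cap B'|\le 2$ for $B\ne B'$, and a routine count using $\lambda_1=21$ and $\lambda_2=5$ shows that each block is disjoint from $16$ blocks and meets $60$ blocks in a pair. With this in hand, $h_{i,j,B}(\bc_{B'})=0$ because either the factor $x_i-x_j$ vanishes (in the sub-cases $B'=B$, $B\cap B'=\emptyset$, $B\cap B'=\{i,j\}$, or $B\cap B'$ disjoint from $\{i,j\}$) or else $|B\cap B'\cap\{i,j\}|=1$ and the linear factor $x_r+x_s+x_t+x_u-1$ evaluates to $1-1=0$. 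For $C$ a $6$-set not in $\cB$, I would use the double count $\sum_{B\in\cB}\binom{|B\cap C|}{3}=\lambda\binom{6}{3}=20$ to force some block $B$ with $m:=|B\cap C|\in\{3,4,5\}$; choosing $i\in B\cap C$ and $j\in B\setminus C$ makes $(x_i-x_j)(\bchi_C)=1$ and the linear factor evaluate to $m-2\in\{1,2,3\}$, so $h_{i,j,B}(\bchi_C)\ne 0$. This gives $\cZ(\langle\cG_0\cup\{h_{i,j,B}\}\rangle)=\{\bc_B:B\in\cB\}$, and combining Proposition~\ref{prop:allradical} with Proposition~\ref{prop:coordring}(ii) yields~(i). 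Part~(ii) is immediate: the generators have degree at most~$2$, so $\gamma_2(\cB)\le 2$, and $\gamma_1(\cB)>3/2$ by Theorem~\ref{thm:tover2}, forcing $\gamma_1(\cB)=\gamma_2(\cB)=2$.

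For~(iii), since $\cI(\cB)$ is radical with $77$ zeros, $\dim\cx[\bx]/\cI(\cB)=77$, so it suffices to show the $77$ cosets $x^{B,2}+\cI(\cB)$ are linearly independent; equivalently, that the $77\times 77$ matrix $K$ with $K_{B,B'}=x^{B,2}(\bc_{B'})=\binom{|B\cap B'|}{2}$ is invertible. Because intersection sizes lie in $\{0,2,6\}$, one checks directly that
\[
K \;=\; 12\,I \,+\, \tfrac{1}{2}\,A^\top A,
\]
where $A$ is the $22\times 77$ point-block incidence matrix. Viewing $(X,\cB)$ as a $2$-$(22,6,5)$ design, $AA^\top=16\,I+5\,J$, with eigenvalues $126$ (multiplicity $1$) and $16$ (multiplicity $21$); hence $A^\top A$ has spectrum $\{126,16,0\}$ with multiplicities $\{1,21,55\}$, and $K$ has spectrum $\{75,20,12\}$, all strictly positive. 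So $K$ is invertible and the cosets form a basis. The main obstacle is essentially bookkeeping: recognizing that, because intersection sizes of distinct blocks take only two values, $K$ admits this clean affine expression in $A^\top A$ that makes the spectral calculation trivial. Everything else is standard counting and casework.
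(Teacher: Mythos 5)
Your proof is correct. For parts (i) and (ii) you follow essentially the paper's route: a case analysis on intersection sizes (which you justify carefully via the standard intersection-number count, whereas the paper simply asserts that intersections lie in $\{0,2\}$) to show each $h_{i,j,B}$ vanishes on $\cB$, then a witness argument for a non-block $6$-set $C$, then Propositions~\ref{prop:allradical} and~\ref{prop:coordring}. Your witness step is phrased a bit differently --- you locate a block $B$ with $|B\cap C|\ge 3$ via the double count $\sum_B\binom{|B\cap C|}{3}=20$ and then read off $h_{i,j,B}(\bchi_C)=|B\cap C|-2\neq 0$, while the paper picks any three points of $C$, takes the unique block $B$ through them, and shows that vanishing of $h_{i,j,B},h_{i,r,B},h_{i,s,B}$ would force $C=B$ --- but these are cosmetically different versions of the same idea.

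The genuinely different contribution is in part (iii). The paper checks invertibility of $M_{B,B'}=\binom{|B\cap B'|}{2}$ by recognizing $M-15I$ as the adjacency matrix of the block graph, a strongly regular graph with spectrum $\{60,5,-3\}$, and citing its parameters. You instead use that the only intersection sizes are $0,2,6$ to write $M=12I+\tfrac12 A^\top A$ and read the spectrum $\{75,20,12\}$ directly from $AA^\top=16I+5J$. This is arguably cleaner: it is entirely self-contained, needs no appeal to the SRG literature, and makes positive-definiteness immediate, at the modest cost of the extra observation that the affine expression in $A^\top A$ exists precisely because there are only two off-diagonal intersection values. (Incidentally, it exposes a small typo in the paper's proof, which writes ``$M-I$'' where ``$M-15I$'' is meant; the eigenvalues $60,5,-3$ quoted there agree with your $75,20,12$ after shifting by $15$.)
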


\begin{proof} By  Theorem  \ref{thm:tover2}, we have $\gamma_1(\cB) \ge 2$. Consider $B\in \cB$ and two
distinct points $i,j \in B$. Write $B=\{i,j,r,s,t,u\}$. Since any two blocks of this design intersect in zero or two 
points, any block  $B'$ that contains exactly one of
$i,j$ contains exactly one  element from $\{r,s,t,u\}$. So $h_{i,j,B}(B') =0$. The same holds if $|B' \cap \{i,j\}|$ is 
even.  On the other hand, let $C$ be a 6-element subset of $X$ and choose three distinct points $i,t,u \in C$.
There is a unique block $B$ containing these three points, say $B=\{i,j,r,s,t,u\}$.  If all three polynomials
$h_{i,j,B}(\bx)$, $h_{i,r,B}(\bx)$, $h_{i,s,B}(\bx)$ vanish on $C$, then we must have $C=B$. This finishes
the proof that $\cZ(  \cG ) = \{ \bc_B \mid B \in \cB \}$ and, as $\cG_0 \subseteq \cG$, the ideal 
$\langle \cG\rangle$ is radical, proving {\bf (i)} and {\bf (ii)}. 

To show that the functions on $\cB$ represented by the polynomials $\{ x^{B,2} \mid B \in \cB\}$ are 
linearly independent, consider
the $77 \times 77$ matrix $M$ with $(B,B')$-entry equal to  the value the polynomial $ x^{B',2} $ takes at the 
point $\bc_{B}$. Then $M-I$ is the adjacency matrix of a well-known\footnote{See, for example, \url{https://www.win.tue.nl/~aeb/graphs/srg/srgtab51-100.html},} strongly regular graph with eigenvalues $60$, $5$ and $-3$.
It follows that $M$ is invertible and the 77 cosets $\{  x^{B,2} + \cI(\cB) \mid B \in \cB\}$ are linearly independent in
the coordinate ring.
\end{proof}

For the small Witt designs, we do not have a computer-free proof of our claims. Let us instead describe
some generators for the ideals. 

First consider the unique Witt design on twelve points.
Let $(X,\cB)$ be the $5$-$(12,6,1)$ design. For three distinct points $i$, $j$ and $k$, let $C=X \setminus \{i,j,k\}$.
The twelve blocks containing $i$, $j$ and $k$ yield a $2$-($9,3,1$) design
$$ (C,\cB'), \qquad \cB'= \{ B \setminus \{i,j,k\} \mid i,j,k \in B \in\cB \} $$
on the point set $C$ and the four parallel classes of this affine plane may be oriented in a total of sixteen ways (each resulting in a 4-set of directed triples of blocks). We find that certain orientations yield polynomials of degree three
which, together with those polynomials in $\cG_0$, generate the ideal $\cI(\cB)$. This shows
$$\gamma_1(\cB) = \gamma_2(\cB)=3. $$

To be precise, let $\mathsf{M}_{12}$ be the subgroup of $S_{12}$ generated by 
$$ \{  (1 \ 4)(3 \ 10)(5 \ 11)(6 \ 12), \ \  (1 \ 8 \ 9)(2 \ 3 \ 4)(5 \ 12 \ 11)(6 \ 10 \ 7) \} $$
and consider the 132 6-sets in the orbit containing $\{1,2,3,4,5,9\}$. Since $\mathsf{M}_{12}$ is 5-transitive, this is a
$5$-($12,6,1$) design. Two parallel lines in the derived design consisting of all blocks containing points
$1$, $2$ and $3$ are  $\{4,5,9\}$ and $\{ 8,10,11\}$. With a computer, one easily verifies that the polynomial
\begin{eqnarray*}
F(\bx) &=& 
x_1 x_4 ( x_{10} - x_{11} ) +
 x_1 x_5 ( x_{11} - x_{8} ) +
 x_1 x_9 ( x_{8} - x_{10} ) +\\
&& x_2 x_9 ( x_{10} - x_{11} ) +
 x_2 x_4 ( x_{11} - x_{8} ) +
 x_2 x_5 ( x_{8} - x_{10} ) + \\
&& x_3 x_5 ( x_{10} - x_{11} ) +
 x_3 x_9 ( x_{11} - x_{8} ) +
 x_3 x_4 ( x_{8} - x_{10} ) 
 \end{eqnarray*}
 vanishes on each block of the design.
It follows that any image
$$    [1^\pi, 2^\pi, 3^\pi], [4^\pi, 5^\pi, 9^\pi], [10^\pi, 11^\pi, 8^\pi] $$
of the three triples of indices under any $\pi \in \mathsf{M}_{12}$ yields another polynomial in the ideal.
It requires a bit more computation, using the {\sc Singular} computer algebra system \cite{gps},  to check that 
the ideal $\cI(\cB)$ is generated by these polynomials together with those in $\cG_0$. The relative orderings within
the three triples above is important and we do not have an intrinsic description of the permissible orderings 
that yield vanishing polynomials.

If we take the above computation as correct, we may determine $\gamma_1$ and $\gamma_2$ for the 
Witt design on eleven points.  If $(X,\cB)$ now denotes a $4$-($11,5,1$) design, we may  use Theorem \ref{thm:tover2}  to see that $\gamma_1(\cB) \ge 3$. By Lemma \ref{lem:derived},
we have equality, and $\gamma_2(\cB)=3$ as well.

Without proof, we note that if $(X,\cB)$ is the unique $3$-($10,4,1$) design \cite[Fig.~9.1]{dougbook}, we 
find $\gamma_1(\cB)=\gamma_2(\cB)=2$.  In addition to the generators of the trivial ideal, we build certain
quadratic generators from any pair $B_1,B_2$ of disjoint blocks. In order to explain these generators, we first 
describe the design.

For the construction in \cite{dougbook}, we take $X = \FF_3^2 \cup \{ \infty \}$ with the numbering 
\begin{center}
\begin{tabular}{|c|ccc|ccc|ccc|} \hline 0 & 1&2&3    &    4&5&6   &   7&8&9   \\ \hline
$\infty$ &  (0,0)&(1,0)&(2,0)   &     (0,1)&(1,1)&(2,1)   &    (0,2)&(1,2)&(2,2)  \\ \hline \end{tabular} 
\end{center}
\noindent and blocks $\{0\} \cup \ell$ where $\ell$ is a line of $AG(2,3)$ and the following eighteen symmetric differences of 
orthogonal lines:
\begin{center}
1245, \ 1278, \ 1269, \ \ 1346, \ 1379, 1358, \ \ 2356, \ 2389, \ 2347, \\
4578, \ 4679, \ 5689, \ \ 1567, \ 2468, \ 3459, \ \ 1489, \ 2579, \ 3678.
\end{center}
For any pair $B_1,B_2\in \cB$ with 
$B_1 \cap B_2 = \emptyset$, the number $m_{i,j}$ of blocks $B\in \cB$ with $|B \cap B_1|=i$ and $|B \cap B_2|=j$
is given in the following table:
\begin{center}
\begin{tabular}{|r|ccccc|} \hline
$i \backslash j$  &  0   &      1      &      2       &      3      &      4      \\  \hline
                     0   &   0   &      0      &     2        &      0     &       1      \\
                    1   &   0   &      0      &     8        &      0     &       0      \\
                    2   &   2   &      8      &     8        &      0     &       0      \\
                    3   &   0   &      0      &     0        &      0     &       0      \\
                    4   &   1   &      0      &     0        &      0     &       0      \\ \hline
\end{tabular}
\end{center}
For each $i\in B_1$, the two blocks meeting $B_1$ only in this point partition $B_2$ into two sets of size two by their 
intersections. We select one of these two to determine two neighbours of $i$ along an octagon as in Figure
\ref{Fig:octagon}. Once $B_1$ and $B_2$ have been selected and this choice of a pair of neighbours
has been made, this determines a quadratic generator for our ideal. We illustrate this with $B_1=\{0,1,2,3\}$
and $B_2=\{4,5,7,8\}$. The resulting polynomial is 
$$  g(\bx) = x_0 x_5 - x_5 x_1 +
x_1 x_7 - x_7 x_3 +
x_3 x_8 - x_8 x_2 +
x_2 x_4 - x_4 x_0 . $$
We leave it to the reader to check that the way in which any block intersects this configuration guarantees
that $g(\bc_B)=0$ for any $B\in \cB$. In fact, just five of these polynomials are needed --- along with $\cG_0$ ---
to generate the ideal.

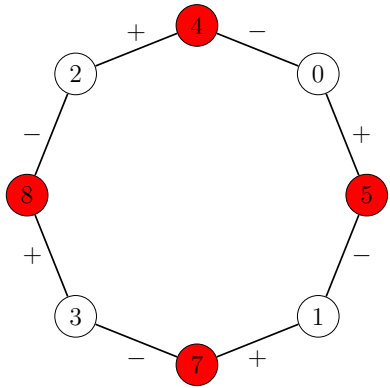
\begin{figure}[htbp]
\label{Fig:octagon}
\begin{center}
\resizebox{2in}{2in}{%
\begin{tikzpicture}[black,node distance=0.5cm,  
solidvert/.style={draw, circle,  fill=red, inner sep=3.5pt},
hollowvert/.style={  draw,  circle,  fill=white,  inner sep=3.5pt},
  every loop/.style={min distance=40pt,in=-30,out=90,looseness=20}]
\node[hollowvert] (A1) at (2,2) {$0$};
\node[hollowvert] (A2) at (2,-2) {$1$};
\node[hollowvert] (A3) at (-2,-2) {$3$};
\node[hollowvert] (A4) at (-2,2) {$2$};
\node[solidvert] (B1) at (0,2.8) {$4$};
\node[solidvert] (B2) at (2.8,0) {$5$};
\node[solidvert] (B3) at (0,-2.8) {$7$};
\node[solidvert] (B4) at (-2.8,0) {$8$};
\path (A1) [thick] edge node [above] {$-$} (B1);
\path (A1) [thick] edge node [right] {$+$} (B2);
\path (A2) [thick] edge node [right] {$-$} (B2);
\path (A2) [thick] edge node [below] {$+$} (B3);
\path (A3) [thick] edge node [below] {$-$} (B3);
\path (A3) [thick] edge node [left] {$+$} (B4);
\path (A4) [thick] edge node [left] {$-$} (B4);
\path (A4) [thick] edge node [above] {$+$} (B1);
\end{tikzpicture}}
\end{center}
\caption{Two disjoint blocks $\{0,1,2,3\}$ and $\{4,5,7,8\}$ and the quadratic polynomial
obtained from the pair.}
\end{figure}

\section{Conclusion}
\label{sec:conclusion}

We have introduced an algebraic approach to the study of $t$-designs which builds on existing machinery tied to 
the space of polynomial functions on blocks. For a design $(X,\cB)$, we introduced the ideal $\cI(\cB)$ and proposed
two parameters $\gamma_1(\cB)$ and $\gamma_2(\cB)$ which we claim capture essential information in the
case of designs where the number of blocks achieves, or is close to, the bound of Ray-Chaudhuri and Wilson. We
prove, among other things, that 
$$ \frac{t+1}{2} \le \gamma_1(\cB) \le \gamma_2(\cB) \le k $$
with the upper bound of $k$ replaced by $t$ in the case of Steiner systems or partial Steiner systems. We
determine the exact value of these parameters for symmetric 2-designs and the Witt designs. By
constructing many triple systems with $\gamma_2(\cB)=k$, we indicate that $\gamma_2(\cB)$ can be larger
than $t$. While we expect the value to be more typically close to $k$, we leave this as an open problem. 

One may also investigate the ideal vanishing on the codewords of an error-correcting code.  In order to compute 
$\gamma_1$ and $\gamma_2$ in such a situation, we have some degree of freedom as these parameters are invariant under affine transformations (provided one is careful with the definition of the trivial ideal). Representing
the codewords of a binary linear $[n,k,d]$ code $C$ by $\pm 1$ vectors in $\re^n$, we see that each dual codeword
$c=[c_1,\ldots, c_n]$ corresponds to an element $f_c(\bx) = -1 +\prod_j x_j^{c_j} $ in the ideal of our code. In
the linear case, $\gamma_1(C)$ is the minimum distance of $C^\bot$ and $\gamma_2(C)$ seems tied to the smallest 
$g$ such that $C^\bot$ is generated by its codewords of weight $g$ or less. So it seems interesting to classify those
codes $C$ for which $\gamma_1(C)=\gamma_2(C)$ as these seem related to tight designs.

\section*{Acknowledgments}

The authors thank Padraig \'{O} Cath\'{a}in, Bill Kantor and Brian Kodalen for useful comments on the work presented here.

\end{document}